\renewcommand\footnotetextcopyrightpermission[1]{} % removes footnote with conference info
\xpatchcmd{\ps@firstpagestyle}{Manuscript submitted to ACM}{}{\typeout{First patch succeeded}}{\typeout{first patch failed}}
\begin{document}
% Title portion
\title{Algorithm xxx: FaVeST --- Fast Vector Spherical Harmonic Transforms}
%\titlenote{\today}

\author{Quoc T. Le Gia}
\affiliation{%
 \institution{The University of New South Wales}
 \city{Sydney}
 \state{NSW}
 \country{Australia}}
\email{qlegia@unsw.edu.au}
\author{Ming Li*}
%\authornote{This is the corresponding author}
\affiliation{%
  %\institution{La Trobe University, Australia \& South China Normal University}
  \institution{Zhejiang Normal University}
  \city{Jinhua}
 \state{Zhejiang}
 \country{China;}
  \institution{La Trobe University}
  \city{Melbourne}
 \state{VIC}
 \country{Australia}
}
%\email{mingli@m.scnu.edu.cn}
\email{mingli@zjnu.edu.cn}
\author{Yu Guang Wang*}
%\authornote{corresponding author}
\orcid{0000-0002-7450-0273}
\affiliation{%
 \institution{The University of New South Wales}
 \city{Sydney}
 \state{NSW}
 \country{Australia}}
\email{yuguang.wang@unsw.edu.au}
\thanks{*~ Corresponding authors}

\begin{abstract}
Vector spherical harmonics on the unit sphere of $\mathbb{R}^3$ have broad applications in geophysics, quantum mechanics and astrophysics. In the representation of a tangent vector field, one needs to evaluate the expansion and the Fourier coefficients of vector spherical harmonics. In this paper, we develop fast algorithms (FaVeST) for vector spherical harmonic transforms on these evaluations. The forward FaVeST evaluates the Fourier coefficients and has a computational cost proportional to $N\log \sqrt{N}$ for $N$ number of evaluation points. The adjoint FaVeST which evaluates a linear combination of vector spherical harmonics with a degree up to $\sqrt{M}$ for $M$ evaluation points has cost proportional to $M\log\sqrt{M}$. Numerical examples of simulated tangent fields illustrate the accuracy, efficiency and stability of FaVeST.
\end{abstract}

%
% The code below should be generated by the tool at
% http://dl.acm.org/ccs.cfm
% Please copy and paste the code instead of the example below.

\ccsdesc[500]{Mathematics of computing~Mathematical analysis}
\ccsdesc[300]{Mathematics of computing~Numerical analysis}
\ccsdesc[100]{Mathematics of computing~Computations of transforms}

%
% End generated code
%

\keywords{Vector spherical harmonics, tangent vector fields, FFT}

% DO NOT use this command unless you want to change
% the default behavior
% \authorsaddresses{Authors' addresses: G.~Zhou, Computer Science
%   Department, College of William and Mary, 104 Jameson Rd,
%   Williamsburg, PA 23185, US, \path{gzhou@wm.edu}; V.~B\'eranger,
%   Inria Paris-Rocquencourt, Rocquencourt, France; A.~Patel, Rajiv
%   Gandhi University, Rono-Hills, Doimukh, Arunachal Pradesh, India;
%   H.~Chan, Tsinghua University, 30 Shuangqing Rd, Haidian Qu, Beijing
%   Shi, China; T.~Yan, Eaton Innovation Center, Prague, Czech Republic;
%   T.~He, C.~Huang, J.~A.~Stankovic University of Virginia, School of
%   Engineering Charlottesville, VA 22903, USA; T. F. Abdelzaher,
%   (Current address) NASA Ames Research Center, Moffett Field,
%   California 94035.}

\maketitle

% The default list of authors is too long for headers.
\renewcommand{\shortauthors}{Q.~T. Le Gia et al.}

\section{Introduction}
Vector spherical harmonics on the unit sphere $\sph{2}$ in $\Rd[3]$ are widely used in many areas such as astrophysics \cite{Planck2015Gravitational,Planck2018Gravitational,Planck2018I}, quantum mechanics \cite{Hill1954,Weinberg1994,Kim2009}, geophysics and geomagnetics \cite{BaEsGi1985,Freeden2009,Grafarend1986,LiBrOlWa2019,Lowes1966}, 3D fluid mechanics \cite{VeGuBiZo2009,VeRaBiZo2011}, global atmospherical modelling \cite{Holton1973,Giraldo2004,Fan_etal2018} and climate change modelling \cite{Swarztrauber1996,Swarztrauber2000,Swarztrauber2004}. For example, in constructing the numerical solution to
the Navier--Stokes equations on the unit sphere \cite{GaLeSl2011},
divergence-free vector spherical harmonics are used.
In simulating scattering waves by single or multiple spherical scatterers \cite{Nedelec2001, WaWaXi2018} modelled by the 3-dimensional Helmholtz equation, both divergence-free and curl-free vector spherical harmonics are used. In these problems,
the solutions which are vector fields are represented by an expansion of vector spherical harmonics. One then needs to evaluate the coefficients of vector spherical harmonics for a vector field and also a linear combination of vector spherical harmonics. They can be evaluated by the \emph{(discrete) forward vector spherical harmonic transform (FwdVSHT)} and \emph{adjoint vector spherical harmonic transform (AdjVSHT)} respectively.
In this paper, we develop fast algorithms for the forward and adjoint vector spherical harmonic transforms for tangent (vector) fields on $\sph{2}$ and their software implementation.

Let $\{(\dsh,\csh): \ell=1,2,\dots,\; m=-\ell,\dots,\ell\}$ be a set of pairs of the (complex-valued) divergence-free and curl-free vector spherical harmonics on $\sph{2}$. The coefficients for divergence-free and curl-free vector spherical harmonics are given by, for $\ell=1,2,\dots$, $m=-\ell,\dots,\ell$,
\begin{equation*}
    \dfco{T}=\int_{\sph{2}}T(\bx)\dsh^*(\bx)\ds\qquad \cfco{T}=\int_{\sph{2}}T(\bx)\csh^*(\bx)\ds,
\end{equation*}
where the $V^*$ is the complex conjugate transpose of the tangent field $V$.
The FwdVSHT for a spherical tangent field $T: \sph{2}\to\Cd[3]$ evaluates these coefficients by approximating the integrals of the coefficients with a quadrature rule which is a set of $N$ pairs of weights $w_i$ and points $\bx_i$ on $\sph{2}$:
\begin{equation}\label{eq:fwdvsht}
    \dfco{T}\approx\sum_{i=1}^{N}w_i T(\bx_i)\dsh^*(\bx_i)\qquad \cfco{T}\approx\sum_{i=1}^{N}w_i T(\bx_i)\csh^*(\bx_i).
\end{equation}
The AdjVSHT evaluates the expansion of $\dsh,\csh$ with two complex sequences $a_{\ell,m},b_{\ell,m}$, $\ell=1,2,\dots$, $m=-\ell,\dots,\ell$ as coefficients for a set of spherical points $\{\bx_i\}_{i=1}^M$, $M\geq1$:
\begin{equation}\label{eq:expan}
    \sum_{\ell=1}^{\infty}\sum_{m=-\ell}^{\ell}\left(a_{\ell,m}\dsh(\bx_i) + b_{\ell,m}\csh(\bx_i)\right),\quad i=1,\dots,M.
\end{equation}
To directly compute the summation of \eqref{eq:fwdvsht} for the coefficients with degree up to $L$ for $L\geq1$, the computational cost for FwdVSHT is $\bigo{NL^2}$. Here the quadrature rule should be properly chosen to minimize the approximation error. The optimal-order number of nodes for this purpose is $N=\bigo{L^2}$. (See Section~\ref{sec:complexity_error}.) Thus, the cost of direct computation for forward vector spherical harmonic transform is $\bigo{N^2}$. On the other hand, to directly compute the expansion \eqref{eq:expan} with truncation degree $L$ for $\ell$ incurs $\bigo{ML^2}$ computational steps. Then, to evaluate $M=\bigo{L^2}$ points, the computational complexity for direct evaluation of AdjVSHT is $\bigo{M^2}$.

In this paper, we develop fast computational strategies for FwdVSHT and AdjVSHT by explicit representations for the divergence-free and curl-free vector spherical harmonics in terms of scalar spherical harmonics. Here the close representation formula exploits Clebsch-Gordan coefficients in quantum mechanics. By this way, the fast scalar spherical harmonic transforms can be applied to speed up computation. The resulting algorithm reduces the computational cost to $\bigo{N\log\sqrt{N}}$ and $\bigo{M\log\sqrt{M}}$, both of which are nearly linear. We thus call the algorithms the \emph{\textbf{Fa}st \textbf{Ve}ctor \textbf{S}pherical Harmonic \textbf{T}ransform} or \emph{\fav}. A software package in Matlab is provided for implementing {\fav}.
We then validate the {\fav} algorithm by the numerical examples of simulated tangent fields on the sphere. The algorithm, accompanied by its software implementation fills the blank of fast algorithms for vector spherical harmonic transforms.

The rest of this paper is organized as follows. In Section~\ref{sec:related_works}, we review the fast Fourier transforms for scalar spherical harmonics on $\sph{2}$ and related works on the computation of vector spherical harmonic transforms. In Section~\ref{sec:vsh}, we introduce definitions and notation on scalar and vector spherical harmonics. In Sections~\ref{sec:FwdVSHT} and \ref{sec:AdjVSHT}, we give the representations for FwdVSHT and AdjVSHT in terms of forward and adjoint scalar spherical harmonic transforms. From these representations, in Section~\ref{sec:complexity_error}, we describe the fast algorithms ({\fav}) for the evaluation of FwdVSHT and AdjVSHT, and show that the computational complexity of the proposed {\fav} is nearly linear. We also estimate the approximation error for forward and adjoint {\fav}s. Section~\ref{sec:software} describes the software package in Matlab developed for {\fav}. In Section~\ref{sec:numer}, we give numerical examples of simulated spherical tangent fields to test {\fav}.
\section{Related Works}\label{sec:related_works}
Fast Fourier Transform (FFT) for $\Rd[d]$ is one of the most influential algorithms in science and engineering \cite{BrighamFFT,press2007numerical,tang2005dfti,pekurovsky2012p3dfft}. On the sphere, fast transforms for scalar spherical harmonics have been extensively studied by many researchers \cite{BaMaKl2018,Gorski_etal2005,KeKuPo2007,KeKuPo2009,healy2004towards,healy2003ffts,mohlenkamp1999fast,suda2004stability,suda2005fast,suda2002fast,Tygert2008,tygert2010fast,RoTy2006,KeKuPo2009,lu2013efficient,drake2008algorithm}.
 In particular, Keiner et al. provide the software library NFFT \cite{KeKuPo2009} which implements the fast forward and adjoint FFT algorithms for scalar spherical harmonics based on the non-equispaced FFT \cite{KeKuPo2007,kunis2003fast,keiner2008fast}. Their package is easy to use in Matlab environment and has been applied in many areas. Suda and Takami in \cite{suda2002fast} propose a fast scalar spherical harmonic transform algorithm with computational complexity $\mathcal{O}(N \log \sqrt{N})$ based on the divide-and-conquer approach with split Legendre functions (where $N$ is the number of nodes in the discretization for integral on $\sph{2}$), and the algorithm is used to solve the shallow water equation \cite{suda2005fast}. Rokhlin and Tygert \cite{RoTy2006} develop the fast algorithms for scalar spherical harmonic expansion with computational time proportional to $N\log \sqrt{N} \log(1/\epsilon)$ for a given precision $\epsilon>0$. Later, Tygert \cite{Tygert2008,tygert2010fast} improves their algorithm to achieve computational cost proportional to $N\log \sqrt{N}$ at any given precision. Reinecke and Seljebotn and Gorski et al. \cite{ReSe2013,Gorski_etal2005} develop the FFTs for spherical harmonics to evaluate Hierarchical Equal Area isoLatitude Pixelation (HEALPix) points \cite{Gorski_etal2005} in various programming languages. Their algorithm can work on millions of evaluation points with spherical harmonic degree up to $6,143$.

In contrast, fast transforms for vector spherical harmonics receive less attention. To the best of our knowledge, there are no existing fast algorithms for the forward and adjoint vector spherical harmonic transforms. Ganesh et al. \cite{GaLeSl2011} use FFTs to speed up their algorithms for solving Navier-Stokes PDEs on the unit sphere. However, their method is based on the idea that applies conventional FFTs to evaluate complex azimuthal exponential terms involved in the formulation of vector spherical harmonics. As fast Legendre transforms are not implemented, their method is not a fast transform. Wang et al. \cite{WaWaXi2018} evaluate vector spherical harmonic expansions via spectral element grids, which is neither fast computation.
\section{Vector Spherical Harmonics}\label{sec:vsh}
In this section, we present definitions and properties about spherical tangent fields, scalar and vector spherical harmonics and Clebsch-Gordan coefficients, see, e.g. \citep{DaXu2013, Edmonds2016},
which we will use in the representation of FwdVSHT and AdjVSHT in the next section.
A tangent (vector) field $T$ is a mapping from $\sph{2}$ to $\Cd[3]$ satisfying the normal component $(T\cdot\bx)\bx$ of $T$ is zero, here $T\cdot\bx:=\sum_{i=1}^3 T^{(i)}x^{(i)}$ is the inner product of $\Cd[3]$ for column vectors $T:=\bigl(T^{(1)},T^{(2)},T^{(3)}\bigr)'$ and $\bx:=(x^{(1)},x^{(2)},x^{(3)})'$, and the $'$ denotes the transpose of a vector (or matrix).
Let $\vLp{2}{2}$ be $L_2$ space of tangent fields on the sphere $\sph{2}$ with inner product
\begin{equation*}
    \InnerL{T,V} = \int_{\sph{2}}T^*(\bx) V(\bx)\mathrm{d}\sigma(\bx)
\end{equation*}
and $L_2$ norm $\|T\|_2=\sqrt{\InnerL{T,T}}$, where $T^*(\bx)$ is the complex conjugate transpose of $T(\bx)$.
Using spherical coordinates, the \emph{scalar spherical harmonics} can be explicitly written as, for $\ell=0,1,\dots$,
\begin{equation*}
\begin{aligned}
  \shY(\PT{x}) &:= \shY(\theta,\varphi) := \sqrt{\frac{2\ell+1}{4\pi}\frac{(\ell-m)!}{(\ell+m)!}}\aLegen{m}(\cos\theta)\: e^{\imu m\varphi}, \quad m=0,1,\ldots, \ell,\\
  \shY(\PT{x}) &:= (-1)^{m}\shY[\ell,-m](\PT{x}), \quad m=-\ell,\dots,-1.
 \end{aligned}
\end{equation*}
%where
%\begin{equation}\label{eq:betalm}
%    \beta_{\ell,m}:=\sqrt{\frac{2\ell+1}{4\pi}\frac{(\ell-m)!}{(\ell+m)!}}.
%\end{equation}
We would suppress the variable $\PT{x}$ in $\shY(\PT{x})$ if no confusion arises. 

In the following, we introduce vector spherical harmonics, see, e.g. \cite{Edmonds2016, Varshalovich_etal1988}.
For each $\ell=1,2,\dots$, $m=-\ell,\dots,\ell$, and integers $j_1,j_2,m_1,m_2$ satisfying $j_2\geq j_1\geq0$, $j_2-j_1\leq \ell\leq j_2+j_1$ and $-j_i\leq m_i\leq j_i$, $i=1,2$, the \emph{Clebsch-Gordan (CG) coefficients} are
\begin{equation*}
 C^{\ell,m}_{j_1,m_1,j_2,m_2} :=
(-1)^{(m+j_1-j_2)}\sqrt{2\ell+1}\left(\begin{array}{lll}j_1 & j_2 & \ell \\ m_1 &  m_2 & -m
\end{array} \right),
\end{equation*}
see e.g. \cite[Chapter~3.5]{MaPe2011}.
The \emph{covariant spherical basis vectors} are 
\begin{equation}\label{eq:cov.sph.basis.vec}
\mathbf{e}_{+1} = - \frac{1}{\sqrt{2}} \left([1,0,0] + i [0,1,0]\right)^T \qquad
  \mathbf{e}_{0} = [0,0,1]^T \qquad
  \mathbf{e}_{-1} = \frac{1}{\sqrt{2}}\left([1,0,0] - i [0,1,0]\right)^T.
\end{equation}
Let
\begin{equation}\label{eq:cl,dl,betal}
    c_{\ell}:=\sqrt{\frac{\ell+1}{2\ell+1}}\qquad d_{\ell}:=\sqrt{\frac{\ell}{2\ell+1}}.
%c_{\ell}:=\frac{1}{\sqrt{(2\ell+1)\ell}},\quad d_{\ell}:=\frac{1}{\sqrt{(2\ell+1)(\ell+1)}},
\end{equation}
We define the coefficients
\begin{equation}\label{eq:Blm1}
\begin{aligned}
  B_{+1,\ell,m} &= c_{\ell} C^{\ell,m}_{\ell-1,m-1,1,1} \shY[\ell-1,m-1] + d_\ell C^{\ell,m}_{\ell+1,m-1,1,1} \shY[\ell+1,m-1]\\
  B_{0,\ell,m} &= c_\ell C^{\ell,m}_{\ell-1,m,1,0} \shY[\ell-1,m] + d_{\ell} C^{\ell,m}_{\ell+1,m,1,0} \shY[\ell+1,m]\\
  B_{-1,\ell,m} &= c_\ell C^{\ell,m}_{\ell-1,m+1,1,-1}\shY[\ell-1,m+1] + d_\ell C^{\ell,m}_{\ell+1,m+1,1,-1}\shY[\ell+1,m+1]
\end{aligned}
\end{equation}
and
\begin{equation}\label{eq:Blm2}
\begin{aligned}
  D_{+1,\ell,m} = i C^{\ell,m}_{\ell,m-1,1,1} \shY[\ell,m-1]\qquad
  D_{0,\ell,m} = i C^{\ell,m}_{\ell,m,1,0}\shY[\ell,m]
  \qquad
  D_{-1,\ell,m}= i C^{\ell,m}_{\ell,m+1,1,-1}\shY[\ell,m+1].
\end{aligned}
\end{equation}

\begin{definition}[Vector Spherical Harmonics]
    For $\ell=1,2,\dots$, $m=-\ell,\dots,\ell$, using the notation of \eqref{eq:cov.sph.basis.vec}, \eqref{eq:Blm1} and \eqref{eq:Blm2},
the \emph{divergence-free} and \emph{curl-free vector spherical harmonics} are defined by
\begin{equation}\label{eq:vsh}
\begin{aligned}
 \dsh = B_{+1,\ell,m} \mathbf{e}_{+1} +
                  B_{0,\ell,m} \mathbf{e}_{0} +
                  B_{-1,\ell,m} \mathbf{e}_{-1}\qquad
 \csh =  D_{+1,\ell,m} \mathbf{e}_{+1} +
                   D_{0,\ell,m} \mathbf{e}_{0}  +
                   D_{-1,\ell,m} \mathbf{e}_{-1}.
\end{aligned}
\end{equation}
Or equivalently, by \eqref{eq:cov.sph.basis.vec} and \eqref{eq:vsh},
\begin{align}\label{eq:vsh2}
    \dsh = \begin{pmatrix}
        -\frac{1}{\sqrt{2}} \left(B_{+1,\ell,m} - B_{-1,\ell,m}\right)\\
        -\frac{1}{\sqrt{2}}\imu \left(B_{+1,\ell,m} + B_{-1,\ell,m}\right)\\
        B_{0,\ell,m}
    \end{pmatrix}
    \qquad
    \csh = \begin{pmatrix}
        -\frac{1}{\sqrt{2}}\left(D_{+1,\ell,m}-D_{-1,\ell,m}\right)\\
        -\frac{1}{\sqrt{2}}\imu \left(D_{+1,\ell,m}+D_{-1,\ell,m}\right)\\
        D_{0,\ell,m}
    \end{pmatrix}.
\end{align}
\end{definition}
The set of vector spherical harmonics $\{\dsh,\csh: \ell=1,2,\dots, m=-\ell,\dots,\ell\}$ in \eqref{eq:vsh} or \eqref{eq:vsh2} forms an orthonormal basis for $\vLp{2}{2}$.
Using the property of $3$-$j$ symbols in \cite{NIST:DLMF}, the CG coefficients in \eqref{eq:Blm1} and \eqref{eq:Blm2} have the following explicit formula.
\begin{equation}\label{eq:ninecoeff}
\begin{array}{llll}
 & C^{\ell,m}_{\ell-1,m-1,1,1} = \displaystyle \sqrt{\frac{(\ell+m)(\ell+m-1)}{(2\ell)(2\ell-1)}}
 & &  C^{\ell,m}_{\ell+1,m-1,1,1} = \displaystyle \sqrt{\frac{(\ell-m+1)(\ell-m+2)}{(2\ell+2)(2\ell+3)}} \\[4mm]
 &\displaystyle C^{\ell,m}_{\ell-1,m,1,0} = \sqrt{\frac{(\ell+m)(\ell-m)}{\ell(2\ell-1)}} 
 & & \displaystyle C^{\ell,m}_{\ell+1,m,1,0} = -\sqrt{\frac{(\ell-m+1)(\ell+m+1)}{(2\ell+3)(\ell+1)}}  \\[4mm]
 & \displaystyle C^{\ell,m}_{\ell-1,m+1,1,-1} = \sqrt{\frac{(\ell-m)(\ell-m-1)}{(2\ell)(2\ell-1)}}
 & & \displaystyle C^{\ell,m}_{\ell+1,m+1,1,-1}= \sqrt{\frac{(\ell+m+1)(\ell+m+2)}{(2\ell+3)(2\ell+2)}}\\[4mm]
  &\displaystyle C^{\ell,m}_{\ell,m-1,1,1} = -\sqrt{\frac{(\ell+m)(\ell-m+1)}{\ell(2\ell+2)}} 
  & & \displaystyle C^{\ell,m}_{\ell,m+1,1,-1} = \sqrt{\frac{(\ell+m+1)(\ell-m)}{\ell(2\ell+2)}}\\[4mm]
   &\displaystyle C^{\ell,m}_{\ell,m,1,0} = \frac{m}{\sqrt{\ell(\ell+1)}}. &
\end{array}
\end{equation}
The expression of these CG coefficients in \eqref{eq:ninecoeff} will simplify computations in the proposed fast algorithms below, and also help with the interested readers follow the routines in our software.

\section{Fast Vector Spherical Harmonic Transforms}\label{sec:favest}
In Subsections~\ref{sec:FwdVSHT} and \ref{sec:AdjVSHT} below, we prove two theorems to represent the vector spherical harmonic transforms by scalar spherical harmonics and Clebsch-Gordan coefficients. From the representation formula, we obtain a computational strategy for fast evaluation of FwdVSHT and AdjVSHT.
Subsection~\ref{sec:complexity_error} shows the analysis of computational complexity and approximation error for the proposed {\fav}. In Subsection~\ref{sec:software}, we provide the ``user guide'' of the software implementation for {\fav} in Matlab environment.

\subsection{Fast Computation for FwdVSHT}\label{sec:FwdVSHT}
In this section, we provide an efficient way to evaluate the Fourier coefficients for vector spherical harmonics. The evaluation exploits the connection between the coefficients for vector and scalar spherical harmonics. This connection together with FFTs for scalar spherical harmonics allows fast computation of FwdVSHT and AdjVSHT.

The \emph{divergence-free and curl-free coefficients} of a tangent field $T$ on $\sph{2}$ are, for $\ell=1,2,\dots$, $m=-\ell,\dots,\ell$,
\begin{equation*}
    \dfco{T} := \InnerL{T,\dsh}\qquad \cfco{T} := \InnerL{T,\csh}.
\end{equation*}
To numerically evaluate them, one needs to discretize the integrals of the coefficients by a \emph{quadrature rule} \cite{HeSlWo2015}, which is a set $\QN:=\{(w_i,\bx_i)\}_{i=1}^{N}$ of $N$, $N\geq2$, pairs of real numbers and points on $\sph{2}$.
\begin{definition}[FwdVSHT]\label{dfn:fwdvsht}
    For a sequence of column vectors $\{T_{k}\}_{k=1}^{N}$ in $\Rd[3]$, \emph{discrete forward divergence-free and curl-free transforms} for $\{T_k\}_{k=1}^{N}$ associated with $\QN$, or simply \emph{FwdVSHT}, are the weighted sums
\begin{equation}\label{eq:fwdvsht2}
\begin{aligned}
    \dflm(T_{\cdot}) := \dflm(\{T_k\}_{k=1}^{N},\QN):= \sum_{k=1}^{N}w_k \dsh^{*}(\bx_k)T_k\\[1mm]%\quad
    \cflm(T_{\cdot}) := \cflm(\{T_k\}_{k=1}^{N},\QN):= \sum_{k=1}^{N}w_k \csh^{*}(\bx_k)T_k.
\end{aligned}
\end{equation}
\end{definition}
For a tangent field $T$, the divergence-free and curl-free coefficients can be approximated by FwdVSHT for the sequence of values $\{T(\bx_i)\}_{i=1}^N$ of the tangent field at quadrature nodes $\{\bx_i\}_{i=1}^N$:
\begin{equation}\label{eq:dfco_cfco}
\begin{aligned}
    \dfco{T}\approx \dflm(\{T(\bx_k)\}_{k=1}^{N},\QN):= \sum_{k=1}^{N}w_k \dsh^{*}(\bx_k)T(\bx_k)\\[1mm]
    \cfco{T}\approx \cflm(\{T(\bx_k)\}_{k=1}^{N},\QN):= \sum_{k=1}^{N}w_k \csh^{*}(\bx_k)T(\bx_k).
\end{aligned}
\end{equation}
As mentioned, we call \eqref{eq:fwdvsht2} \emph{forward vector spherical harmonic transform}, or \emph{FwdVSHT} for tangent field $T$.

Now, we design fast computation for FwdVSHT in Definition~\ref{dfn:fwdvsht} using the scalar version of FwdVSHT. Let $\{f_k\}_{k=1}^{N}$ be a real sequence. For $\ell\geq1, m=-\ell,\dots,\ell$, the \emph{discrete forward scalar spherical harmonic transform (FwdSHT)} is
\begin{equation}\label{eq:Flm}
    \flm(f_{\cdot}):=\flm(f_{k}):= \flm(\{f_k\}_{k=1}^{N},\QN) := \sum_{k=1}^{N}w_k f_k\shY^{*}(\bx_k).
\end{equation}
They are approximations of spherical harmonic coefficients $
\langle f,\shY\rangle$ for $f$ by the quadrature rule $\QN$. 
The following theorem shows a representation of the FwdVSHT by FwdSHT for a sequence $\{T_k\}_{k=1}^{N}$, which would allow us to efficiently compute $\dflm(T_{\cdot})$ and $\cflm(T_{\cdot})$.
For each $k=1,\dots,N$, let $\bigl(T^{(1)}_k,T^{(2)}_k,T^{(3)}_k\bigr)$ be the components of the vector $T_k$. For $\ell=1,\dots$, $m=-\ell,\dots,\ell$, using the notation of \eqref{eq:cl,dl,betal} and \eqref{eq:ninecoeff}, we define the coefficients
\begin{equation}\label{eq:xi_klm}
	\begin{array}{lll}
		&\xi^{(1)}_{\ell,m}:= c_{\ell+1} C_{\ell,m,1,1}^{\ell+1,m+1}\qquad
		&\xi^{(2)}_{\ell,m}:= d_{\ell-1} C_{\ell,m,1,1}^{\ell-1,m+1}\\[2mm]
		&\xi^{(3)}_{\ell,m}:= c_{\ell+1} C_{\ell,m,1,-1}^{\ell+1,m-1}\qquad
		&\xi^{(4)}_{\ell,m}:= d_{\ell-1} C_{\ell,m,1,-1}^{\ell-1,m-1}\\[2mm]
		&\xi^{(5)}_{\ell,m}:= c_{\ell+1} C_{\ell,m,1,0}^{\ell+1,m}\qquad
		&\xi^{(6)}_{\ell,m}:= d_{\ell-1} C_{\ell,m,1,0}^{\ell-1,m}
	\end{array}
\end{equation}
and
\begin{equation}\label{eq:mu_klm}
		\mu^{(1)}_{\ell,m} := C_{\ell,m,1,1}^{\ell,m+1}\qquad
		\mu^{(2)}_{\ell,m} := C_{\ell,m,1,0}^{\ell,m}\qquad
		\mu^{(3)}_{\ell,m} := C_{\ell,m,1,-1}^{\ell,m-1}.
\end{equation}

\begin{theorem}\label{thm:almblmviaFlm} Let $\{T_{k}\}_{k=1}^{N}$ be a sequence of column vectors in $\Rd[3]$ and $\QN:=\{(w_i,\bx_i)\}_{i=1}^{N}$ a quadrature rule on $\sph{2}$. Then, for $\ell=1,2,\dots$, $m=-\ell,\dots,\ell$, the FwdVSHT can be represented by its scalar version FwdSHT, as follows.
\begin{equation}\label{eq:almblmviaFlm}
\begin{aligned}
		\dflm(T_{\cdot})
				& = \frac{1}{\sqrt{2}}\biggl\{\xi_{\ell-1,m-1}^{(1)}\left[-\flm[\ell-1,m-1]\left(T^{(1)}_{\cdot}\right)+\imu\: \flm[\ell-1,m-1]\left(T^{(2)}_{\cdot}\right)\right]
		  + \xi_{\ell+1,m-1}^{(2)}\left[-\flm[\ell+1,m-1]\left(T^{(1)}_{\cdot}\right)+\imu\: \flm[\ell+1,m-1]\left(T^{(2)}_{\cdot}\right)\right] \\
			&\qquad+ \xi_{\ell-1,m+1}^{(3)}\left[\flm[\ell-1,m+1]\left(T^{(1)}_{\cdot}\right)+\imu \:\flm[\ell-1,m+1]\left(T^{(2)}_{\cdot}\right)\right]+
			\xi_{\ell+1,m+1}^{(4)}\left[\flm[\ell+1,m+1]\left(T^{(1)}_{\cdot}\right)+\imu\:\flm[\ell+1,m+1]\left(T^{(2)}_{\cdot}\right)\right]\bigg\}\\
			&\qquad +
			\xi_{\ell-1,m}^{(5)}\flm[\ell-1,m]\left(T^{(3)}_{\cdot}\right) +
			\xi_{\ell+1,m}^{(6)}\flm[\ell+1,m]\left(T^{(3)}_{\cdot}\right)\\[2mm]
		\cflm(T_{\cdot})
	&= -\frac{1}{\sqrt{2}}\imu\left[\mu_{\ell,m-1}^{(1)}\left(-\flm[\ell,m-1]\bigl(T_{\cdot}^{(1)}\bigr)+\imu\: \flm[\ell,m-1]\bigl(T_{\cdot}^{(2)}\bigr)\right) +  \mu_{\ell,m+1}^{(3)}\left(\flm[\ell,m+1]\bigl(T_{\cdot}^{(1)}\bigr)+\imu\: \flm[\ell,m+1]\bigl(T_{\cdot}^{(2)}\bigr)\right)\right]\\
	&\qquad-\imu \mu_{\ell,m}^{(2)} \flm\bigl(T_{\cdot}^{(3)}\bigr),
	\end{aligned}
	\end{equation}
	where we use the notation of \eqref{eq:fwdvsht2}, \eqref{eq:Flm}, \eqref{eq:xi_klm} and \eqref{eq:mu_klm}.
\end{theorem}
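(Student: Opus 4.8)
The plan is to establish \eqref{eq:almblmviaFlm} by a direct, if bookkeeping-heavy, computation rather than by any clever identity. I would expand the Hermitian products $\dsh^*(\bx_k)T_k$ and $\csh^*(\bx_k)T_k$ using the componentwise representation \eqref{eq:vsh2}, insert the expansions \eqref{eq:Blm1} and \eqref{eq:Blm2} of $B_{\cdot,\ell,m}$ and $D_{\cdot,\ell,m}$ into scalar spherical harmonics, and then recognise each resulting weighted node sum as a scalar FwdSHT. The one structural fact I rely on is that the Clebsch--Gordan coefficients \eqref{eq:ninecoeff} are real, so complex conjugation touches only the harmonics $\shY$ and the explicit factors $\imu$.

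First I would treat the divergence-free transform. Taking the complex conjugate transpose of the column vector in \eqref{eq:vsh2} and pairing it with $T_k=(T^{(1)}_k,T^{(2)}_k,T^{(3)}_k)'$ gives
\[
  \dsh^*(\bx_k)\,T_k = -\frac{1}{\sqrt{2}}\bigl(B_{+1,\ell,m}^*-B_{-1,\ell,m}^*\bigr)T^{(1)}_k + \frac{\imu}{\sqrt{2}}\bigl(B_{+1,\ell,m}^*+B_{-1,\ell,m}^*\bigr)T^{(2)}_k + B_{0,\ell,m}^*\,T^{(3)}_k,
\]
each coefficient evaluated at $\bx_k$, where the sign flip in the middle term is exactly the conjugation of $\imu$. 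I then substitute \eqref{eq:Blm1}, multiply by $w_k$, and sum over $k=1,\dots,N$. Because the sums are finite I may freely interchange them, so that for every scalar harmonic $\shY[\ell',m']$ occurring in some $B$ the node sum $\sum_k w_k\,\shY[\ell',m']^*(\bx_k)T^{(i)}_k$ appears, which by \eqref{eq:Flm} is precisely $\flm[\ell',m'](T^{(i)}_{\cdot})$. The degrees and orders $(\ell',m')$, namely $(\ell-1,m\mp1)$, $(\ell+1,m\mp1)$ and $(\ell\pm1,m)$, are read off directly from the harmonics in \eqref{eq:Blm1}.

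The remaining work is to match the accumulated prefactors with the symbols of \eqref{eq:xi_klm}, which is purely a relabelling. For instance, the factor of $\shY[\ell-1,m-1]$ in $B_{+1,\ell,m}$ is $c_\ell C^{\ell,m}_{\ell-1,m-1,1,1}$, which is exactly $\xi^{(1)}_{\ell-1,m-1}$ after the shift $(\ell,m)\mapsto(\ell-1,m-1)$ in \eqref{eq:xi_klm}; the same shift identifies the five remaining prefactors with $\xi^{(2)}_{\ell+1,m-1}$, $\xi^{(3)}_{\ell-1,m+1}$, $\xi^{(4)}_{\ell+1,m+1}$, $\xi^{(5)}_{\ell-1,m}$, $\xi^{(6)}_{\ell+1,m}$. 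Since the first component contributes the $T^{(1)}$ sums with the signs of $B_{+1,\ell,m}^*-B_{-1,\ell,m}^*$ (so $+$ on the order-$(m-1)$ harmonics, $-$ on the order-$(m+1)$ harmonics) scaled by $-1/\sqrt{2}$, while the second contributes the $T^{(2)}$ sums all with one sign scaled by $\imu/\sqrt{2}$, collecting the order-$(m-1)$ and order-$(m+1)$ harmonics reproduces, respectively, the bracketed combinations $-\flm[\ell',m'](T^{(1)}_{\cdot})+\imu\,\flm[\ell',m'](T^{(2)}_{\cdot})$ and $\flm[\ell',m'](T^{(1)}_{\cdot})+\imu\,\flm[\ell',m'](T^{(2)}_{\cdot})$ of \eqref{eq:almblmviaFlm}; together with the two leftover $T^{(3)}$ terms this is exactly the stated formula for $\dflm(T_{\cdot})$.

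The curl-free identity then follows by the same steps with \eqref{eq:Blm2} and \eqref{eq:mu_klm} in place of \eqref{eq:Blm1} and \eqref{eq:xi_klm}; here all scalar harmonics sit at degree $\ell$, so only the orders $m,m\pm1$ appear. The one extra subtlety is the explicit factor $\imu$ carried by every $D_{\cdot,\ell,m}$: under conjugation it becomes $-\imu$, and together with the $\imu/\sqrt{2}$ and the sign of the middle component it produces the overall prefactors $-\imu/\sqrt{2}$ and $-\imu$ seen in the $\cflm$ line. I do not expect a genuine conceptual obstacle; the difficulty is entirely clerical, and the place demanding the most care is tracking the two independent sources of conjugation --- the $\imu$ in the middle component of \eqref{eq:vsh2} and, in the curl-free case, the $\imu$ inside $D_{\cdot,\ell,m}$ --- while keeping the index shifts straight so that the CG coefficients of \eqref{eq:Blm1}--\eqref{eq:Blm2} line up with the $\xi$ and $\mu$ symbols.
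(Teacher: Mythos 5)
Your proposal is correct and follows essentially the same route as the paper's own proof: expand $\dsh^*(\bx_k)T_k$ and $\csh^*(\bx_k)T_k$ componentwise via \eqref{eq:vsh2}, substitute \eqref{eq:Blm1}--\eqref{eq:Blm2}, interchange the finite sums to recognise each node sum as a scalar FwdSHT, and relabel the real CG prefactors by the index shifts $(\ell,m)\mapsto(\ell\pm1,m),(\ell\pm1,m\pm1)$ to obtain the $\xi$ and $\mu$ symbols of \eqref{eq:xi_klm} and \eqref{eq:mu_klm}. Your sign and conjugation bookkeeping (the conjugated $\imu$ in the middle component and the conjugated $\imu$ inside the $D$'s) matches the paper's computation exactly.
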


\begin{proof} Let $\ell=1,2,\dots,$ and $m=-\ell,\dots,\ell$. By \eqref{eq:Blm1} and \eqref{eq:xi_klm}, the discrete forward divergence-free transform
	\begin{align*}
		\dflm(T_{\cdot}) &= \sum_{k=1}^{N}w_k \dsh^{*}(\bx_k)T_k\\
		&= \sum_{k=1}^{N} w_k \biggl(-\frac{1}{\sqrt{2}}T^{(1)}_k \bigl(B_{+1,\ell,m}^{*}(\bx_k)-B^{*}_{-1,\ell,m}(\bx_k)\bigr)
		+\frac{1}{\sqrt{2}} \imu T^{(2)}_k \bigl(B^{*}_{+1,\ell,m}(\bx_k)+B^{*}_{-1,\ell,m}(\bx_k)\bigr)
%		\\
%		&\qquad 
		+ T^{(3)}_{k}B^{*}_{0,\ell,m}(\bx_k)
		\biggr)\\
		&= \sum_{k=1}^{N} w_k \biggl[
		\Bigl(\frac{1}{\sqrt{2}}\bigl(-T^{(1)}_{k}+\imu T^{(2)}_{k}\bigr)c_{\ell} C_{\ell-1,m-1,1,1}^{\ell,m}\Bigr)\shY[\ell-1,m-1]^{*}(\bx_k)\\
		&\qquad + \Bigl(\frac{1}{\sqrt{2}}\bigl(-T^{(1)}_{k}+\imu T^{(2)}_{k}\bigr)d_{\ell} C_{\ell+1,m-1,1,1}^{\ell,m}\Bigr)\shY[\ell+1,m-1]^{*}(\bx_k)\\
		&\qquad + \Bigl(\frac{1}{\sqrt{2}}\bigl(T^{(1)}_{k}+\imu T^{(2)}_{k}\bigr)c_{\ell} C_{\ell-1,m+1,1,-1}^{\ell,m}\Bigr)\shY[\ell-1,m+1]^{*}(\bx_k)\\
		&\qquad + \Bigl(\frac{1}{\sqrt{2}}\bigl(T^{(1)}_{k}+\imu T^{(2)}_{k}\bigr)d_{\ell} C_{\ell+1,m+1,1,-1}^{\ell,m}\Bigr)\shY[\ell+1,m+1]^{*}(\bx_k)\\
		&\qquad + \bigl(T^{(3)}_{k} c_{\ell} C_{\ell-1,m,1,0}^{\ell,m}\bigr) \shY[\ell-1,m]^{*}(\bx_k)\\
		&\qquad + \bigl(T^{(3)}_{k} d_{\ell} C_{\ell+1,m,1,0}^{\ell,m}\bigr) \shY[\ell+1,m]^{*}(\bx_k)
		\biggr]\\
		& = \frac{1}{\sqrt{2}}\biggl\{\xi_{\ell-1,m-1}^{(1)}\left[-\flm[\ell-1,m-1]\left(T^{(1)}_{\cdot}\right)+\imu\: \flm[\ell-1,m-1]\left(T^{(2)}_{\cdot}\right)\right]
		  + \xi_{\ell+1,m-1}^{(2)}\left[-\flm[\ell+1,m-1]\left(T^{(1)}_{\cdot}\right)+\imu\: \flm[\ell+1,m-1]\left(T^{(2)}_{\cdot}\right)\right] \\
			&\qquad+ \xi_{\ell-1,m+1}^{(3)}\left[\flm[\ell-1,m+1]\left(T^{(1)}_{\cdot}\right)+\imu \:\flm[\ell-1,m+1]\left(T^{(2)}_{\cdot}\right)\right]+
			\xi_{\ell+1,m+1}^{(4)}\left[\flm[\ell+1,m+1]\left(T^{(1)}_{\cdot}\right)+\imu\:\flm[\ell+1,m+1]\left(T^{(2)}_{\cdot}\right)\right]\bigg\}\\
			&\qquad +
			\xi_{\ell-1,m}^{(5)}\flm[\ell-1,m]\left(T^{(3)}_{\cdot}\right) +
			\xi_{\ell+1,m}^{(6)}\flm[\ell+1,m]\left(T^{(3)}_{\cdot}\right).
	\end{align*}
In a similar way, for the curl-free case, we use \eqref{eq:Blm2} and \eqref{eq:mu_klm} to obtain
\begin{align*}
	\cflm(T_{\cdot}) &:= \sum_{k=1}^{N}w_k \csh^{*}(\bx_k)T(\bx_k)\\
	& = \sum_{k=1}^{N}w_k \left\{-\frac{1}{\sqrt{2}}\imu\left[\mu_{\ell,m-1}^{(1)}\shY[\ell,m-1]^{*}(\bx_k)\left(-T^{(1)}_{k}+\imu T^{(2)}_k\right)
	+ \mu_{\ell,m+1}^{(3)}\shY[\ell,m+1]^{*}(\bx_k)\left(T^{(1)}_{k}+\imu T^{(2)}_k\right)\right]\right.\\
	&\qquad \left. - \imu \mu_{\ell,m}^{(2)}\shY[\ell,m]^{*}(\bx_k)T^{(3)}_{k}\right\}\\
	&= -\frac{1}{\sqrt{2}}\imu\left[\mu_{\ell,m-1}^{(1)}\left(-\flm[\ell,m-1]\bigl(T_{\cdot}^{(1)}\bigr)+\imu\: \flm[\ell,m-1]\bigl(T_{\cdot}^{(2)}\bigr)\right) +  \mu_{\ell,m+1}^{(3)}\left(\flm[\ell,m+1]\bigl(T_{\cdot}^{(1)}\bigr)+\imu\: \flm[\ell,m+1]\bigl(T_{\cdot}^{(2)}\bigr)\right)\right]\\
	&\qquad-\imu \mu_{\ell,m}^{(2)} \flm\bigl(T_{\cdot}^{(3)}\bigr),
\end{align*}
thus completing the proof.
\end{proof}

\subsection{Fast Computation for AdjVSHT}\label{sec:AdjVSHT}
In this section, we study the adjoint vector spherical harmonic transforms for a tangent field on the sphere.
\begin{definition}[AdjVSHT]
	For $L\geq1$ and two complex sequences $\{a_{\ell,m},b_{\ell,m}: \ell=1,2,\dots,\; m=-\ell,\dots,\ell\}$, the \emph{adjoint vector spherical harmonic transform} or \emph{AdjVSHT} of degree $L$ is the Fourier partial sum  
\begin{equation}\label{eq:SLvec}
	\bbS_{L}({a_{\ell,m}},{b_{\ell,m}};\bx)
	:=\sum_{\ell=1}^{L}\sum_{m=-\ell}^{\ell} \left(a_{\ell,m} \dsh(\bx) + b_{\ell,m} \csh(\bx)\right),\quad \bx\in\sph{2}.
\end{equation}
Or equivalently, by \eqref{eq:cov.sph.basis.vec} and \eqref{eq:vsh},
\begin{equation}\label{eq:SLvecviaBlm.Dlm}
	\bbS_{L}({a_{\ell,m}},{b_{\ell,m}})
	= \sum_{\ell=1}^{L}\sum_{m=-\ell}^{\ell}\begin{pmatrix}
		-\frac{1}{\sqrt{2}} \left(a_{\ell,m}B_{+1,\ell,m} - a_{\ell,m}B_{-1,\ell,m}\right)\\
		-\frac{1}{\sqrt{2}}\imu \left(a_{\ell,m}B_{+1,\ell,m} + a_{\ell,m}B_{-1,\ell,m}\right)\\
		a_{\ell,m} B_{0,\ell,m}
	\end{pmatrix}
	+ \sum_{\ell=1}^{L}\sum_{m=-\ell}^{\ell}\begin{pmatrix}
		-\frac{1}{\sqrt{2}}\left(b_{\ell,m}D_{+1,\ell,m}-b_{\ell,m}D_{-1,\ell,m}\right)\\
		-\frac{1}{\sqrt{2}}\imu \left(b_{\ell,m}D_{+1,\ell,m}+b_{\ell,m}D_{-1,\ell,m}\right)\\
		b_{\ell,m}D_{0,\ell,m}.
	\end{pmatrix}.
\end{equation}
\end{definition}

For $L\ge0$ and a finite complex sequence $g_{\ell,m}$, $\ell=0,1,\dots,L$, $m=-\ell,\dots,\ell$, the \emph{adjoint scalar spherical harmonic transform} or \emph{AdjSHT} of degree $L$ is the Fourier partial sum of scalar spherical harmonics $\shY$:
\begin{equation}\label{eq:SLscalar}
	S_{L}(g_{\ell,m}) := \sum_{\ell=0}^{L}\sum_{m=-\ell}^{\ell}g_{\ell,m} \shY.
\end{equation}

The AdjVSHT can be represented by its scalar version with CG coefficients, as shown by the following theorem.
With the notation of \eqref{eq:cl,dl,betal} and \eqref{eq:ninecoeff}, we define the coefficients
\begin{equation}\label{eq:nulm1}
\begin{aligned}
	\nu_{\ell,m}^{(1)} &:= c_{\ell+1}\left(a_{\ell+1,m+1}C_{\ell,m,1,1}^{\ell+1,m+1}
	-a_{\ell+1,m-1}C_{\ell,m,1,-1}^{\ell+1,m-1}\right),\quad \ell=0,\dots,L-1,\; m=-\ell,\dots,\ell,\\[1mm]
	\nu_{\ell,m}^{(2)} &:= \left\{\begin{array}{ll}d_{\ell-1}\left(a_{\ell-1,m+1}C_{\ell,m,1,1}^{\ell-1,m+1}
	-a_{\ell-1,m-1}C_{\ell,m,1,-1}^{\ell-1,m-1}\right), & \ell=2,\dots,L+1,\; |m|=0,1,\dots,\ell-2,\\
	0, & \ell=0, 1 \mbox{~or~} |m|=\ell-1,\; \ell,
	\end{array}\right.\\
	\nu_{\ell,m}^{(3)} &:= i c_{\ell+1}\left(a_{\ell+1,m+1}C_{\ell,m,1,1}^{\ell+1,m+1}
	+ a_{\ell+1,m-1}C_{\ell,m,1,-1}^{\ell+1,m-1}\right),\quad \ell=0,\dots,L-1,\; m=-\ell,\dots,\ell,\\
	\nu_{\ell,m}^{(4)} &:= \left\{\begin{array}{ll}
i d_{\ell-1}\left(a_{\ell-1,m+1}C_{\ell,m,1,1}^{\ell-1,m+1}
	+ a_{\ell-1,m-1}C_{\ell,m,1,-1}^{\ell-1,m-1}\right), & \ell=2,\dots,L+1,\; m=-\ell,\dots,\ell,\\
	0, & \ell=0,1 \mbox{~or~} |m| = \ell-2,\;\ell-1,
	\end{array}\right.
\end{aligned}
\end{equation}
\begin{equation}\label{eq:nulm2}
\begin{aligned}
	\nu_{\ell,m}^{(5)} &:= a_{\ell+1,m}c_{\ell+1} C_{\ell,m,1,0}^{\ell+1,m},\quad \ell=0,\dots,L-1,\; m=-\ell,\dots,\ell,\\
	\nu_{\ell,m}^{(6)} &:= \left\{
	\begin{array}{ll}
		a_{\ell-1,m}d_{\ell-1}C_{\ell,m,1,0}^{\ell-1,m} , & \ell=2,\dots,L+1,\; |m|=0,1,\dots,\ell-1,\\
		0,& \ell=0,1 \mbox{~or~} |m|=\ell,
	\end{array}
	\right.
\end{aligned}
\end{equation}
and
\begin{equation}\label{eq:etalm}
\begin{aligned}
		\eta_{\ell,m}^{(1)} &:= \left\{\begin{array}{ll}\imu\left(b_{\ell,m+1}C_{\ell,m,1,1}^{\ell,m+1}
	-b_{\ell,m-1} C_{\ell,m,1,-1}^{\ell,m-1}\right), & \ell=1,\dots,L, \; m=-\ell,\dots,\ell,\\
	0, & \ell=0,\; m = 0,
	\end{array}\right.\\
		\eta_{\ell,m}^{(2)} &:= \left\{\begin{array}{ll}	
b_{\ell,m+1}C_{\ell,m,1,1}^{\ell,m+1}
	+b_{\ell,m-1} C_{\ell,m,1,-1}^{\ell,m-1}, & \ell=1,\dots,L,\; m=-\ell,\dots,\ell,\\
	0, & \ell=0,\; m=0,
	\end{array}\right.\\
	\eta_{\ell,m}^{(3)}&:= \left\{\begin{array}{ll}
 \imu b_{\ell,m}C_{\ell,m,1,0}^{\ell,m}, & \ell=1,\dots,L,\; m=-\ell,\dots,\ell,\\
	0, & \ell=0,\; m=0.
	\end{array}\right.
\end{aligned}
\end{equation}

\begin{theorem}\label{thm:SLvecviascal}
Let $\{a_{\ell,m},b_{\ell,m}: \ell=1,2,\dots,\; m=-\ell,\dots,\ell\}$ be two complex sequences. For $L\geq1$, the AdjVSHT for $a_{\ell,m}, b_{\ell,m}$ can be represented by its scalar version AdjSHT, as follows.
	\begin{equation}\label{eq:SLvecviascal}
	\bbS_{L}(a_{\ell,m},b_{\ell,m})
	= \begin{pmatrix}
		-\frac{1}{\sqrt{2}} \left(S_{L-1}\bigl(\nu^{(1)}_{\ell,m}\bigr) + S_{L+1}\bigl(\nu^{(2)}_{\ell,m}\bigr)+S_{L}\bigl(\eta_{\ell,m}^{(1)}\bigr)\right)\\[2mm]
		-\frac{1}{\sqrt{2}} \left(S_{L-1}\bigl(\nu^{(3)}_{\ell,m}\bigr) + S_{L+1}\bigl(\nu^{(4)}_{\ell,m}\bigr) - S_{L}\bigl(\eta_{\ell,m}^{(2)}\bigr)\right)\\[2mm]
		S_{L-1}(\nu_{\ell,m}^{(5)}) + S_{L+1}(\nu_{\ell,m}^{(6)}) + S_{L}(\eta_{\ell,m}^{(3)})
		\end{pmatrix},
\end{equation}
where we use the notation of \eqref{eq:SLscalar}, \eqref{eq:nulm1}, \eqref{eq:nulm2} and \eqref{eq:etalm}.
\end{theorem}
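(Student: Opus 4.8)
The plan is to reduce \eqref{eq:SLvecviascal} to an identity between scalar spherical harmonic expansions, starting from the Cartesian form \eqref{eq:SLvecviaBlm.Dlm} of the adjoint transform and inserting the expansions \eqref{eq:Blm1} and \eqref{eq:Blm2} of the coefficients $B_{\pm1,\ell,m},B_{0,\ell,m}$ and $D_{\pm1,\ell,m},D_{0,\ell,m}$ into scalar harmonics. In contrast to the forward case of Theorem~\ref{thm:almblmviaFlm}, where one evaluates a single output coefficient by direct substitution, here the output is a tangent field obtained by summing over all $(\ell,m)$, while each scalar transform $S_{L'}$ in \eqref{eq:SLscalar} is itself a sum over scalar harmonics. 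The essential manoeuvre is therefore to interchange the order of summation and collect, for each fixed scalar harmonic $\shY[\ell',m']$, the total scalar coefficient multiplying it; comparing that coefficient with \eqref{eq:nulm1}, \eqref{eq:nulm2} and \eqref{eq:etalm} then yields the claim componentwise.

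After substitution, every Cartesian component is a double sum over $\ell=1,\dots,L$ and $m=-\ell,\dots,\ell$ of scalar harmonics of three types: the $B_{\pm1,\ell,m},B_{0,\ell,m}$ pieces (attached to the divergence-free coefficients $a_{\ell,m}$) carry harmonics of degree $\ell-1$ and $\ell+1$, whereas the $D_{\pm1,\ell,m},D_{0,\ell,m}$ pieces (attached to the curl-free coefficients $b_{\ell,m}$) carry harmonics of degree $\ell$. I would treat the three degree-families separately. Setting $\ell'=\ell-1$ turns the degree-$(\ell-1)$ family into a scalar partial sum to degree $L-1$ (producing the coefficients $\nu_{\ell,m}^{(1)},\nu_{\ell,m}^{(3)},\nu_{\ell,m}^{(5)}$ and hence $S_{L-1}$); setting $\ell'=\ell+1$ turns the degree-$(\ell+1)$ family into a partial sum to degree $L+1$ (producing $\nu_{\ell,m}^{(2)},\nu_{\ell,m}^{(4)},\nu_{\ell,m}^{(6)}$ and hence $S_{L+1}$); and the degree-$\ell$ family needs no shift, giving $S_L$ with the coefficients $\eta_{\ell,m}^{(1)},\eta_{\ell,m}^{(2)},\eta_{\ell,m}^{(3)}$. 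Within each family there is also an order shift: a term whose Clebsch--Gordan factor has lower indices ending in $1,1$ carries a harmonic of order $m-1$ (so $m'=m-1$), while one ending in $1,-1$ carries order $m+1$ (so $m'=m+1$). Fixing $\shY[\ell',m']$ thus pairs a contribution from the input at order $m'+1$ with one at order $m'-1$, which is exactly the two-term structure of the $\nu$ and $\eta$ coefficients. It then remains to match the constants $c_{\ell+1},d_{\ell-1}$, the prefactors $-1/\sqrt2,\,-\imu/\sqrt2,\,1$, and the extra $\imu$ carried by the curl-free coefficients $D$; in the second component this last factor meets the $\imu$ of the prefactor to give $\imu\cdot\imu=-1$, which is the origin of the minus sign in front of $S_L(\eta_{\ell,m}^{(2)})$ in \eqref{eq:SLvecviascal}.

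The step I expect to be the main obstacle is the careful treatment of the index boundaries. Throughout I would adopt the conventions that $\shY[\ell,m]=0$ and $a_{\ell,m}=b_{\ell,m}=0$ whenever $|m|>\ell$, and that $a_{\ell,m},b_{\ell,m}$ are absent for $\ell<1$. The difficulty is that after the order shift $m'$ may sit at the extreme values $m'=\pm\ell'$: there the inadmissible harmonics that the raw expansion seems to create must drop out, and the paired two-term coefficients must collapse to the correct single surviving term. Two mechanisms are in play, and one must track which applies in each family. In the degree-$(\ell\mp1)$ directions the offending harmonic is killed because the relevant factor from \eqref{eq:ninecoeff} vanishes at the edge order, for instance $C_{\ell-1,m+1,1,-1}^{\ell,m}=\sqrt{(\ell-m)(\ell-m-1)/((2\ell)(2\ell-1))}$ vanishes at $m=\ell-1,\ell$, exactly where $\shY[\ell-1,m+1]$ would be out of range; in the degree-shifted families it is instead the companion input coefficient that falls out of range and is zero by convention, so that the two-term formula reduces to the single admissible term. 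Verifying these edge cases one family at a time---precisely the content of the piecewise clauses in \eqref{eq:nulm1}, \eqref{eq:nulm2} and \eqref{eq:etalm}---and checking that the degree-shifted ranges extend, by genuinely vanishing terms, to the full ranges $\ell'=0,\dots,L-1$, $\ell'=0,\dots,L+1$ and $\ell'=0,\dots,L$ of the scalar partial sums in \eqref{eq:SLscalar}, is the delicate part. Once it is settled, the remaining identification is a direct, if lengthy, term-by-term computation entirely parallel to the proof of Theorem~\ref{thm:almblmviaFlm}.
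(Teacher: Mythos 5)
Your overall route is the same as the paper's: start from the Cartesian form \eqref{eq:SLvecviaBlm.Dlm}, insert the expansions \eqref{eq:Blm1} and \eqref{eq:Blm2}, re-index the degree-$(\ell-1)$, degree-$(\ell+1)$ and degree-$\ell$ families into the partial sums $S_{L-1}$, $S_{L+1}$ and $S_{L}$, and read off the collected coefficients $\nu^{(i)}_{\ell,m}$, $\eta^{(j)}_{\ell,m}$; even your sign bookkeeping ($\imu\cdot\imu=-1$ producing the minus in front of $S_{L}\bigl(\eta^{(2)}_{\ell,m}\bigr)$) is exactly the paper's. The gap sits in the edge cases, precisely the point you single out as the main obstacle. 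For the degree-$(\ell+1)$ family, the mechanism you describe --- the companion input coefficient falls out of range, so the two-term coefficient collapses to a single surviving term --- yields at $|m|=\ell-1,\ell$ a generally \emph{nonzero} value, e.g.\ $\nu^{(2)}_{\ell,-\ell}=d_{\ell-1}\,a_{\ell-1,-\ell+1}\,C^{\ell-1,-\ell+1}_{\ell,-\ell,1,1}$, whereas \eqref{eq:nulm1} defines $\nu^{(2)}_{\ell,m}:=0$ there (and similarly for $\nu^{(4)}$). So your two claims --- that the edge coefficients collapse to a single surviving term, and that this is ``precisely the content of the piecewise clauses'' of \eqref{eq:nulm1}--\eqref{eq:etalm} --- contradict each other, and the plan as written cannot terminate at the literal identity \eqref{eq:SLvecviascal}.

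The contradiction is not one you can remove by adhering more closely to the paper's conventions, because your mechanism is the correct one and the stated clauses are what is off. Take $L=1$, $a_{1,-1}=1$ and all other coefficients zero. The first Cartesian component of $\bbS_{1}$ then contains the term $-\frac{1}{\sqrt{2}}\,d_{1}\,C^{1,-1}_{2,-2,1,1}\,\shY[2,-2]$ with $d_{1}C^{1,-1}_{2,-2,1,1}=\sqrt{1/3}\cdot\sqrt{3/5}=\sqrt{1/5}\neq 0$ (this term is needed for the harmonic to have unit norm), while the right-hand side of \eqref{eq:SLvecviascal} contains no $\shY[2,-2]$ term at all, since $\nu^{(2)}_{2,-2}:=0$. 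The source, in the paper's own proof, is the merging step in which the two $d$-family sums, running over $m\in[-\ell,\ell-2]$ and $m\in[-\ell+2,\ell]$ respectively, are replaced by one sum over the intersection $[-\ell+2,\ell-2]$: unlike the $c$-family, where the discarded terms carry out-of-range harmonics and CG factors from \eqref{eq:ninecoeff} that genuinely vanish, and unlike the $\eta$-family, where the out-of-range $b$'s kill the companion terms, the discarded $d$-family edge terms do not vanish. To complete your argument, keep as the values of $\nu^{(2)}_{\ell,m}$ and $\nu^{(4)}_{\ell,m}$ at $|m|=\ell-1,\ell$ the single surviving terms your own collapse argument produces (equivalently, use the two-term formulas there with the convention that out-of-range $a$'s are zero); with these amended definitions your plan goes through verbatim and proves the corrected identity, while $\nu^{(1)},\nu^{(3)},\nu^{(5)},\nu^{(6)}$ and the $\eta$'s are unaffected.
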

\begin{proof}
By \eqref{eq:Blm1}, we write each component of \eqref{eq:SLvecviaBlm.Dlm} by AdjSHT, as follows. For the divergence-free term in \eqref{eq:SLvecviaBlm.Dlm},
\begin{align*}
	&-\frac{1}{\sqrt{2}}\sum_{\ell=1}^{L}\sum_{m=-\ell}^{\ell} \left(a_{\ell,m}B_{+1,\ell,m} - a_{\ell,m}B_{-1,\ell,m}\right)\\
	=& -\frac{1}{\sqrt{2}}\biggl[\sum_{\ell=1}^{L}\sum_{m=-\ell}^{\ell}a_{\ell,m}\left(c_{\ell}C_{\ell-1,m-1,1,1}^{\ell,m}\right)\shY[\ell-1,m-1]
%	\\
%	&\quad\qquad 
	+ \sum_{\ell=1}^{L}\sum_{m=-\ell}^{\ell}a_{\ell,m}\left(d_{\ell}C_{\ell+1,m-1,1,1}^{\ell,m}\right)\shY[\ell+1,m-1]\\
	&\quad\qquad - \sum_{\ell=1}^{L}\sum_{m=-\ell}^{\ell}a_{\ell,m}\left(c_{\ell}C_{\ell-1,m+1,1,-1}^{\ell,m}\right)\shY[\ell-1,m+1]
%	\\
%	&\quad\qquad 
	- \sum_{\ell=1}^{L}\sum_{m=-\ell}^{\ell}a_{\ell,m}\left(d_{\ell}C_{\ell+1,m+1,1,-1}^{\ell,m}\right)\shY[\ell+1,m+1]
	\biggr]\\
	=& -\frac{1}{\sqrt{2}}\biggl[\sum_{\ell=0}^{L-1}\sum_{m=-\ell-2}^{\ell}a_{\ell+1,m+1}\left(c_{\ell+1}C_{\ell,m,1,1}^{\ell+1,m+1}\right)\shY
%	\\
%	&\quad\qquad 
	+ \sum_{\ell=2}^{L+1}\sum_{m=-\ell}^{\ell-2}a_{\ell-1,m+1}\left(d_{\ell-1}C_{\ell,m,1,1}^{\ell-1,m+1}\right)\shY\\
	&\quad\qquad - \sum_{\ell=0}^{L-1}\sum_{m=-\ell}^{\ell+2}a_{\ell+1,m-1}\left(c_{\ell+1}C_{\ell,m,1,-1}^{\ell+1,m-1}\right)\shY
%	\\
%	&\quad\qquad 
	- \sum_{\ell=2}^{L+1}\sum_{m=-\ell+2}^{\ell}a_{\ell-1,m-1}\left(d_{\ell-1}C_{\ell,m,1,-1}^{\ell-1,m-1}\right)\shY
	\biggr].
\end{align*}
This and \eqref{eq:ninecoeff} give
\begin{align}
	&-\frac{1}{\sqrt{2}}\sum_{\ell=1}^{L}\sum_{m=-\ell}^{\ell} \left(a_{\ell,m}B_{+1,\ell,m} - a_{\ell,m}B_{-1,\ell,m}\right)\notag\\
	&\qquad = -\frac{1}{\sqrt{2}}\left[\sum_{\ell=0}^{L-1}\sum_{m=-\ell-2}^{\ell+2}c_{\ell+1}\left(a_{\ell+1,m+1}C_{\ell,m,1,1}^{\ell+1,m+1}
	-a_{\ell+1,m-1}C_{\ell,m,1,-1}^{\ell+1,m-1}\right)\shY\right.\notag\\
	&\qquad\qquad \left.+ \sum_{\ell=2}^{L+1}\sum_{m=-\ell+2}^{\ell-2}d_{\ell-1}\left(a_{\ell-1,m+1}C_{\ell,m,1,1}^{\ell-1,m+1}
	-a_{\ell-1,m-1}C_{\ell,m,1,-1}^{\ell-1,m-1}\right)\shY\right],\label{eq:first.component1}
\end{align}
where for the $|m|>\ell$ (which exceeds the range of $m$ for spherical harmonics), $\shY=0$.
We then let
\begin{align*}
	\nu_{\ell,m}^{(1)} &:= c_{\ell+1}\left(a_{\ell+1,m+1}C_{\ell,m,1,1}^{\ell+1,m+1}
	-a_{\ell+1,m-1}C_{\ell,m,1,-1}^{\ell+1,m-1}\right),\quad \ell=0,\dots,L-1,\; m=-\ell,\dots,\ell,\\[1mm]
	\nu_{\ell,m}^{(2)} &:= \left\{\begin{array}{ll}d_{\ell-1}\left(a_{\ell-1,m+1}C_{\ell,m,1,1}^{\ell-1,m+1}
	-a_{\ell-1,m-1}C_{\ell,m,1,-1}^{\ell-1,m-1}\right), & \ell=2,\dots,L+1,\; |m|=0,1,\dots,\ell-2,\\
	0, & \ell=0, 1 \mbox{~or~} |m|=\ell-1,\; \ell,
	\end{array}\right.
\end{align*}
by which and \eqref{eq:first.component1},

\begin{equation*}
	-\frac{1}{\sqrt{2}}\sum_{\ell=1}^{L}\sum_{m=-\ell}^{\ell} \left(a_{\ell,m}B_{+1,\ell,m} - a_{\ell,m}B_{-1,\ell,m}\right) =
	-\frac{1}{\sqrt{2}} \left(S_{L-1}\bigl(\nu^{(1)}_{\ell,m}\bigr) + S_{L+1}\bigl(\nu^{(2)}_{\ell,m}\bigr) \right).
\end{equation*}
Similarly,
\begin{align*}
	&-\frac{1}{\sqrt{2}}\imu\sum_{\ell=1}^{L}\sum_{m=-\ell}^{\ell} \left(a_{\ell,m}B_{+1,\ell,m} + a_{\ell,m}B_{-1,\ell,m}\right)\notag\\
	&\qquad = -\frac{1}{\sqrt{2}}\left[\sum_{\ell=0}^{L-1}\sum_{m=-\ell-2}^{\ell+2}\imu c_{\ell+1}\left(a_{\ell+1,m+1}C_{\ell,m,1,1}^{\ell+1,m+1}
	+ a_{\ell+1,m-1}C_{\ell,m,1,-1}^{\ell+1,m-1}\right)\shY\right.\notag\\
	&\qquad\qquad \left.+ \sum_{\ell=2}^{L+1}\sum_{m=-\ell+2}^{\ell-2}\imu d_{\ell-1}\left(a_{\ell-1,m+1}C_{\ell,m,1,1}^{\ell-1,m+1}
	+ a_{\ell-1,m-1}C_{\ell,m,1,-1}^{\ell-1,m-1}\right)\shY\right].\label{eq:first.component1}
\end{align*}
Let
\begin{align*}
	\nu_{\ell,m}^{(3)} &:= i c_{\ell+1}\left(a_{\ell+1,m+1}C_{\ell,m,1,1}^{\ell+1,m+1}
	+ a_{\ell+1,m-1}C_{\ell,m,1,-1}^{\ell+1,m-1}\right),\quad \ell=0,\dots,L-1,\; m=-\ell,\dots,\ell,\\[1mm]
	\nu_{\ell,m}^{(4)} &:= \left\{\begin{array}{ll}
i d_{\ell-1}\left(a_{\ell-1,m+1}C_{\ell,m,1,1}^{\ell-1,m+1}
	+ a_{\ell-1,m-1}C_{\ell,m,1,-1}^{\ell-1,m-1}\right), & \ell=2,\dots,L+1,\; m=-\ell,\dots,\ell,\\
	0, & \ell=0,1 \mbox{~or~} |m| = \ell-2,\;\ell-1,
	\end{array}\right.
\end{align*}
then
\begin{equation*}
	-\frac{1}{\sqrt{2}}\imu\sum_{\ell=1}^{L}\sum_{m=-\ell}^{\ell} \left(a_{\ell,m}B_{+1,\ell,m} + a_{\ell,m}B_{-1,\ell,m}\right) =
	-\frac{1}{\sqrt{2}} \left(S_{L-1}\bigl(\nu^{(3)}_{\ell,m}\bigr) + S_{L+1}\bigl(\nu^{(4)}_{\ell,m}\bigr)\right).
\end{equation*}
As
\begin{equation*}
	\sum_{\ell=1}^{L}\sum_{m=-\ell}^{\ell}a_{\ell,m} B_{0,\ell,m}
	= \sum_{\ell=0}^{L-1}\sum_{m=-\ell}^{\ell}a_{\ell+1,m}c_{\ell+1} C_{\ell,m,1,0}^{\ell+1,m} \shY +
	\sum_{\ell=2}^{L+1}\sum_{m=-\ell+1}^{\ell-1} a_{\ell-1,m}d_{\ell-1}C_{\ell,m,1,0}^{\ell-1,m} \shY,
\end{equation*}
we let
\begin{align*}
	\nu_{\ell,m}^{(5)} &:= a_{\ell+1,m}c_{\ell+1} C_{\ell,m,1,0}^{\ell+1,m},\quad \ell=0,\dots,L-1,\; m=-\ell,\dots,\ell,\\
	\nu_{\ell,m}^{(6)} &:= \left\{
	\begin{array}{ll}
		a_{\ell-1,m}d_{\ell-1}C_{\ell,m,1,0}^{\ell-1,m} , & \ell=2,\dots,L+1,\; |m|=0,1,\dots,\ell-1,\\
		0,& \ell=0,1 \mbox{~or~} |m|=\ell,
	\end{array}
	\right.
\end{align*}
then,
\begin{equation*}
	\sum_{\ell=1}^{L}\sum_{m=-\ell}^{\ell}a_{\ell,m} B_{0,\ell,m}
	= S_{L-1}(\nu_{\ell,m}^{(5)}) + S_{L+1}(\nu_{\ell,m}^{(6)}).
\end{equation*}

For the curl-free term in \eqref{eq:SLvecviaBlm.Dlm},
\begin{align*}
	&-\frac{1}{\sqrt{2}}\sum_{\ell=1}^{L}\sum_{m=-\ell}^{\ell}
	\left(b_{\ell,m}D_{+1,\ell,m}-b_{\ell,m}D_{-1,\ell,m}\right)\notag\\[1mm]
	&\qquad= -\frac{1}{\sqrt{2}}
	\left(\sum_{\ell=1}^{L}\sum_{m=-\ell}^{\ell}\imu b_{\ell,m}C_{\ell,m-1,1,1}^{\ell,m} \shY[\ell,m-1]
	-\sum_{\ell=1}^{L}\sum_{m=-\ell}^{\ell}\imu b_{\ell,m} C_{\ell,m+1,1,-1}^{\ell,m}\shY[\ell,m+1]\right)\notag\\[1mm]
	&\qquad=-\frac{1}{\sqrt{2}}
	\left(\sum_{\ell=1}^{L}\sum_{m=-\ell-1}^{\ell-1}\imu b_{\ell,m+1}C_{\ell,m,1,1}^{\ell,m+1} \shY
	-\sum_{\ell=1}^{L}\sum_{m=-\ell+1}^{\ell+1}\imu b_{\ell,m-1} C_{\ell,m,1,-1}^{\ell,m-1}\shY\right)\notag\\[1mm]
	&\qquad= -\frac{1}{\sqrt{2}}
	\sum_{\ell=1}^{L}\sum_{m=-\ell}^{\ell}\imu\left(b_{\ell,m+1}C_{\ell,m,1,1}^{\ell,m+1}
	-b_{\ell,m-1} C_{\ell,m,1,-1}^{\ell,m-1}\right)\shY, %\label{eq:difree1}
\end{align*}
where we use \eqref{eq:cl,dl,betal} and \eqref{eq:ninecoeff}.
Let
\begin{align*}
	\eta_{\ell,m}^{(1)} := \left\{\begin{array}{ll}\imu\left(b_{\ell,m+1}C_{\ell,m,1,1}^{\ell,m+1}
	-b_{\ell,m-1} C_{\ell,m,1,-1}^{\ell,m-1}\right), & \ell=1,\dots,L, \; m=-\ell,\dots,\ell,\\
	0, & \ell=0,\; m = 0,
	\end{array}\right.
\end{align*}
we then obtain the equation
\begin{equation*}
	-\frac{1}{\sqrt{2}}\sum_{\ell=1}^{L}\sum_{m=-\ell}^{\ell}
	\left(b_{\ell,m}D_{+1,\ell,m}-b_{\ell,m}D_{-1,\ell,m}\right)
	= -\frac{1}{\sqrt{2}}\:S_{L}\bigl(\eta_{\ell,m}^{(1)}\bigr).
\end{equation*}
In a similar way,
\begin{equation*}
	-\frac{1}{\sqrt{2}}\imu \sum_{\ell=1}^{L}\sum_{m=-\ell}^{\ell}\left(b_{\ell,m}D_{+1,\ell,m}+b_{\ell,m}D_{-1,\ell,m}\right)
	= \frac{1}{\sqrt{2}}
	\sum_{\ell=1}^{L}\sum_{m=-\ell}^{\ell}\left(b_{\ell,m+1}C_{\ell,m,1,1}^{\ell,m+1}
	+b_{\ell,m-1} C_{\ell,m,1,-1}^{\ell,m-1}\right)\shY. \label{eq:difree1}
\end{equation*}
If letting
\begin{align*}
	\eta_{\ell,m}^{(2)} := \left\{\begin{array}{ll}	
\left(b_{\ell,m+1}C_{\ell,m,1,1}^{\ell,m+1}
	+b_{\ell,m-1} C_{\ell,m,1,-1}^{\ell,m-1}\right), & \ell=1,\dots,L,\; m=-\ell,\dots,\ell,\\
	0, & \ell=0,\; m=0,
	\end{array}\right.
\end{align*}
we then obtain
\begin{equation*}
	-\frac{1}{\sqrt{2}}\imu \sum_{\ell=1}^{L}\sum_{m=-\ell}^{\ell}\left(b_{\ell,m}D_{+1,\ell,m}+b_{\ell,m}D_{-1,\ell,m}\right)
	= \frac{1}{\sqrt{2}}\:S_{L}\bigl(\eta_{\ell,m}^{(2)}\bigr).
\end{equation*}

As
\begin{equation*}
	\sum_{\ell=1}^{L}\sum_{m=-\ell}^{\ell}b_{\ell,m}D_{0,\ell,m}
	= \sum_{\ell=1}^{L}\sum_{m=-\ell}^{\ell}\imu b_{\ell,m}C_{\ell,m,1,0}^{\ell,m}\shY,
\end{equation*}
we let
\begin{equation*}
	\eta_{\ell,m}^{(3)}:= \left\{\begin{array}{ll}
 \imu b_{\ell,m}C_{\ell,m,1,0}^{\ell,m}, & \ell=1,\dots,L,\; m=-\ell,\dots,\ell,\\
	0, & \ell=0,\; m=0,
	\end{array}\right.
\end{equation*}
one then has the representation
%\begin{equation*}
	$\sum_{\ell=1}^{L}\sum_{m=-\ell}^{\ell}b_{\ell,m}D_{0,\ell,m}
	= S_{L}(\eta_{\ell,m}^{(3)})$.
%\end{equation*}
Thus,
\begin{align*}
	\bbS_{L}({a_{\ell,m}},{b_{\ell,m}})
	&= \begin{pmatrix}
		-\frac{1}{\sqrt{2}} \left(S_{L-1}\bigl(\nu^{(1)}_{\ell,m}\bigr) + S_{L+1}\bigl(\nu^{(2)}_{\ell,m}\bigr)\right)\\[1mm]
		-\frac{1}{\sqrt{2}} \left(S_{L-1}\bigl(\nu^{(3)}_{\ell,m}\bigr) + S_{L+1}\bigl(\nu^{(4)}_{\ell,m}\bigr)\right)\\[1mm]
		S_{L-1}(\nu_{\ell,m}^{(5)}) + S_{L+1}(\nu_{\ell,m}^{(6)})
	\end{pmatrix}
	+\begin{pmatrix}
		-\frac{1}{\sqrt{2}}S_{L}\bigl(\eta_{\ell,m}^{(1)}\bigr)\\[1mm]
		\frac{1}{\sqrt{2}}S_{L}\bigl(\eta_{\ell,m}^{(2)}\bigr)\\[1mm]
		S_{L}(\eta_{\ell,m}^{(3)})
	\end{pmatrix}\notag\\
	&= \begin{pmatrix}
		-\frac{1}{\sqrt{2}} \left(S_{L-1}\bigl(\nu^{(1)}_{\ell,m}\bigr) + S_{L+1}\bigl(\nu^{(2)}_{\ell,m}\bigr)+S_{L}\bigl(\eta_{\ell,m}^{(1)}\bigr)\right)\\[1mm]
		-\frac{1}{\sqrt{2}} \left(S_{L-1}\bigl(\nu^{(3)}_{\ell,m}\bigr) + S_{L+1}\bigl(\nu^{(4)}_{\ell,m}\bigr) - S_{L}\bigl(\eta_{\ell,m}^{(2)}\bigr)\right)\\[1mm]
		S_{L-1}(\nu_{\ell,m}^{(5)}) + S_{L+1}(\nu_{\ell,m}^{(6)}) + S_{L}(\eta_{\ell,m}^{(3)})
		\end{pmatrix},
\end{align*}
which then completes the proof.
\end{proof}

\subsection{Algorithms and Errors}\label{sec:complexity_error}
\subsubsection{Fast algorithms} In Algorithms~\ref{alg:favest.fwd} and \ref{alg:favest.adj}, we write down the algorithms for FwdVSHT and AdjVSHT from Theorems~\ref{thm:almblmviaFlm} and \ref{thm:SLvecviascal}. They achieve fast computation by FFTs for scalar spherical harmonics. In this paper, we use the NFFT package \cite{KeKuPo2009} for fast scalar spherical harmonic transforms on $\sph{2}$ which have the computational cost $\bigo{N\log \sqrt{N}}$ and $\bigo{M\log \sqrt{M}}$ for $N$ evaluation points and $M$ (Fourier) coefficients.
\begin{algorithm}[ht]
\SetAlgoNoLine
\KwIn{A sequence $\{T_1,\dots,T_N\} \subset\Rd[3]$, $N\geq2$ and a quadrature rule $\QN:=\{(w_i,\bx_i)\}_{i=1}^{N}$ on $\sph{2}$, and maximal degree $L$, $L\geq1$.}
\KwOut{Complex sequences of AdjVSHT $\dflm(T_k)$ and $\cflm(T_k)$ in \eqref{eq:fwdvsht2}, $\ell=1,\dots,L,\; m=-\ell,\dots,\ell$.}
\begin{itemize}
	\item[Step 1] Compute the AdjSHT $\flm\left(-T^{(1)}_{\cdot}+\imu T^{(2)}_{\cdot}\right)$, $\flm\left(T^{(1)}_{\cdot}+\imu T^{(2)}_{\cdot}\right)$ and $\flm\left(T^{(3)}_{\cdot}\right)$ for $\ell=0,\dots,L+1$, $m=-\ell,\dots,\ell$, by forward FFT for scalar spherical harmonics.
	\item[Step 2] Compute $\xi_{\ell,m}^{(i)}$, $i=1,2,\dots,6$ and $\mu_{\ell,m}^{(j)}$, $j=1,2,3$, for $\ell = 0,\dots,L+1$, $m=-\ell,\dots,\ell$.
	\item[Step 3] Compute $\dflm(T_{\cdot})$ and $\cflm(T_{\cdot})$ for $\ell=1,\dots,L,\; m=-\ell,\dots,\ell$, by \eqref{eq:almblmviaFlm}.
\end{itemize}
\caption{Forward {\fav}}
\label{alg:favest.fwd}
\end{algorithm}
\vskip -5mm
 % Algorithm
\begin{algorithm}[h]
\SetAlgoNoLine
\KwIn{Two complex sequences of coefficients $\{(a_{\ell,m}, b_{\ell,m}): \ell=1,\dots,L,\; m=-\ell,\dots,\ell\}$, and $M$ evaluation points $\{\bx_i\}_{i=1}^M$.}
\KwOut{Complex sequence of AdjVSHT $\bbS_{L}(a_{\ell,m},b_{\ell,m};\bx_i)$ in \eqref{eq:SLvec}, $i=1,\dots,M$.}
For each point $\bx_i$, $i=1,\dots,M$, do the following steps.
\begin{itemize}
	\item[Step 1] Compute $\nu_{\ell,m}^{(i)},\eta_{\ell,m}^{(j)}$, $i=1,\dots,6$, $j=1,2,3$, by \eqref{eq:nulm1}, \eqref{eq:nulm2} and \eqref{eq:etalm}.
	\item[Step 2] Evaluate $S_{L}(\nu_{\ell,m}^{(1)})$, $S_{L+1}(\nu_{\ell,m}^{(2)})$, $S_{L-1}(\nu_{\ell,m}^{(3)})$, $S_{L+1}(\nu_{\ell,m}^{(4)})$, $S_{L-1}(\nu_{\ell,m}^{(5)})$, $S_{L+1}(\nu_{\ell,m}^{(6)})$, $S_{L}(\eta_{\ell,m}^{(1)})$, $S_{L}(\eta_{\ell,m}^{(2)})$ and $S_{L}(\eta_{\ell,m}^{(3)})$, by adjoint FFT for scalar spherical harmonics.
	\item[Step 3] Compute $\bbS_{L}(a_{\ell,m},b_{\ell,m};\bx_i)$, by \eqref{eq:SLvecviascal}.
\end{itemize}
\caption{Adjoint {\fav}}
\label{alg:favest.adj}
\end{algorithm}
In Algorithm~\ref{alg:favest.fwd}, using the forward FFT for scalar spherical harmonics, Step~1 uses $3$ times scalar FFTs with degree up to $L+1$ and then has cost $\bigo{N\log \sqrt{N}}$ (assuming using $N=\bigo{L^2}$ points); Step~2 is the direct computation of $\xi_{\ell,m}^{(i)}$ and $\mu_{\ell,m}^{(j)}$ by \eqref{eq:xi_klm}, \eqref{eq:mu_klm} and \eqref{eq:ninecoeff} for degree $\ell$ at most $L+1$ and then has cost $\bigo{\sqrt{N}}$; Step~3 evaluates \eqref{eq:almblmviaFlm} using the results of Steps~1 and 2 and has cost $\bigo{1}$. Thus, evaluating the $\dflm(\{T_k\}_{k=1}^{N},\QN)$ and $\cflm(\{T_k\}_{k=1}^{N},\QN)$ incurs computational cost proportional to $N\log \sqrt{N}$. 
 In Algorithm~\ref{alg:favest.adj}, with $M=\bigo{L^2}$ evaluation points, Step~1 is computed by \eqref{eq:nulm1}, \eqref{eq:nulm2}, \eqref{eq:etalm} and \eqref{eq:cl,dl,betal}, \eqref{eq:ninecoeff} for degree up to $L+1$ has computational cost $\bigo{\sqrt{M}}$; Step~2 uses $8$ times adjoint FFTs for scalar spherical harmonics and then has computational cost $\bigo{M\log\sqrt{M}}$; Step~3 computes \eqref{eq:SLvecviascal} by the results of the previous steps has cost $\bigo{1}$. Thus, the {\fav} for the adjoint case has computational cost proportional to $M\log\sqrt{M}$. These analyses show that the computational complexity of the proposed algorithms is nearly linear. We then call the algorithms \emph{\textbf{Fa}st \textbf{Ve}ctor \textbf{S}pherical Harmonic \textbf{T}ransform} or \emph{{\fav}}, and call Algorithms~\ref{alg:favest.fwd} and \ref{alg:favest.adj} \emph{forward {\fav}} and \emph{adjoint {\fav}} respectively.

\subsubsection{Errors.} Let $T$ be a tangent field in $\vLp{2}{2}$.
The approximation error of FwdVSHT $\dflm(\{T(\bx_k)\}_{k=1}^{N},\QN)$ and $\cflm(\{T(\bx_k)\}_{k=1}^{N},\QN)$ for the divergence-free and curl-free coefficients $\dfco{T}$ and $\cfco{T}$ in \eqref{eq:dfco_cfco} depends on the approximation quality of quadrature rule $\{(w_k,\bx_k)\}_{k=1}^{N}$ for integrals on the sphere and the smoothness of the tangent field $T$. Given a tangent field, the choice of quadrature rule is the key to reducing the approximation error. A quadrature rule $\{(w_i,\bx_i)\}_{i=1}^N$ is called \emph{exact for polynomials} of degree $L$, $L\geq0$ if for all spherical polynomials $p$ of degree at most $L$,
\begin{equation*}
	\sum_{i=1}^N w_i p(\bx_i) = \int_{\sph{2}}p(\bx)\ds,
\end{equation*}
see e.g. \cite{HeSlWo2015}. Algorithm~\ref{alg:favest.fwd} using a quadrature rule that is exact for scalar spherical polynomial of degree $L$ has the approximation error $CL^{-s}$ for $\dsh^{*}T$ lying in Sobolev space $H^s(\sph{2})$ (of scalar spherical functions) and $s>1$, where the constant $C$ depends only on the Sobolev norm of the function $\dsh^{*}T$. The order $L^{-s}$ is optimal, as a consequence of \cite{HeSl2005Optimal,HeSl2005Worst,HeSl2006Cubature}.
On the other hand, Algorithm~\ref{alg:favest.adj} can evaluate FwdVSHT with zero-loss. Its approximation for the expansion \eqref{eq:expan} is equal to the truncation error for the vector spherical harmonic expansion, as determined by
\begin{equation*}
	\biggl\|\sum_{\ell=L+1}^{\infty}\sum_{m=-\ell}^{\ell}\left(a_{\ell,m}\dsh + b_{\ell,m}\csh\right)\biggr\|_{L_2}^2
	=
	\sum_{\ell=L+1}^{\infty}\sum_{m=-\ell}^{\ell}\left(|a_{\ell,m}|^2 + |b_{\ell,m}|^2\right),
\end{equation*}
where we have used the orthogonality of vector spherical harmonics.

\subsection{Software Description}\label{sec:software}
We provide the software package in Matlab for {\fav} which includes Matlab demo and routines for {\fav} and the routines for numerical examples in the next section.
The {\fav} package can be downloaded from GitHub at \url{https://github.com/mingli-ai/FaVeST}. It has been tested in Matlab environment in operating systems including Ubuntu 16.04.6, macOS High Sierra, macOS Mojave, Windows 7, Windows 8 and Windows 10.
In the Matlab library repository, the main m-files include \textbf{FaVeST$\_$fwd.m} and \textbf{FaVeST$\_$adj.m}, corresponding to Algorithms~\ref{alg:favest.fwd} and \ref{alg:favest.adj} respectively. Inside these two functions, the package NFFT\footnote{\url{https://www-user.tu-chemnitz.de/~potts/nfft}} \cite{KeKuPo2009} is used to run the scalar FFTs on the sphere. (The NFFT is the only package requested by the {\fav} package.) There are four inputs for the function \textbf{FaVeST$\_$fwd.m}: $T,L,X,w$, where $T$ is a tangent field sampled at the point set $X$, $L$ is the highest degree of spherical harmonics, $X$ is the set of quadrature nodes and $w$ is the set of quadrature weights. The output includes the two sequences of divergence-free and curl-free coefficients $\dflm(T_{\cdot})$ and $\cflm(T_{\cdot})$ for degree $\ell=1,\dots,L$ and $m=-\ell,\dots,\ell$ as given in  \eqref{eq:fwdvsht2}. The input of \textbf{FaVeST$\_$adj.m} contains two finite sequences $a_{\ell,m}$ and $b_{\ell,m}$ (with $\ell\leq L$ for some $L\geq1$ and $m=-\ell,\dots,\ell$) as the coefficients for divergence-free and curl-free parts, and a point set $X$ for evaluation. The output is the AdjVSHT in \eqref{eq:SLvec} for $a_{\ell,m}$ and $b_{\ell,m}$.

%For Python repository, our software draws support from the package \emph{healpy} that uses Hierarchical Equal Area isoLatitude Pixelization (HEALPix) scheme and bundles the HEALPix C++ library for performing spherical harmonic transforms\footnote{\url{http://healpix.sourceforge.net}} \cite{Gorski_etal2005}.

\section{Numerical Examples}
\label{sec:numer}
In this section, we show numerical examples for verification of the performance of the proposed {\fav} algorithm. We start from the description of two types of polynomial-exact quadrature rules used in the experiments, and then present the examples of tangent fields on the sphere. We show the reconstruction and error in orthographic projection on the sphere for {\fav} using both kinds of point sets. We also show the CPU computational time of {\fav} which evaluates a tangent field for degree up to $2,250$ and at $10$ million spherical points.

\subsection{Quadrature Rules}
We use two types of point sets on $\sph{2}$ in the experiments, as follows.
\begin{enumerate}
  \item \emph{Gauss-Legendre tensor product rule} (GL) \cite{HeWo2012}. The Gauss-Legendre tensor (product) rule is a (polynomial-exact but not equal area) quadrature rule $\QN:=\{(w_i,\bx_i)\}_{i=1}^{N}, i=0,\ldots,N\}$ on the sphere generated by the tensor product of the Gauss-Legendre nodes on the interval $[-1,1]$ and equi-spaced nodes on the longitude with non-equal weights. To be exact for polynomials of degree $L$, one needs around $2L^2$ GL points. Figure~\ref{fig:QN}(a) shows $N=512$ GL points for degree $L=16$. An alternative GL was developed by \cite{graf2011computation,graf2013efficient,KeKuPo2020} with smaller number of points.
  \item \emph{Symmetric spherical $t$-designs} (SD) \cite{Womersley_ssd_URL}. The symmetric spherical $t$-design is a (polynomial-exact) quadrature rule $\QN:=\{(w_i,\bx_i)\}_{i=1}^{N}, i=0,\ldots,N\}$ on the sphere $\sph{2}$ with equal weights $w_i=1/N$. The points are almost uniformly (or in formal definition, quasi-uniform\footnote{The quasi-uniformity describes the distribution of a sequence of point sets. A sequence of point sets on the sphere is called quasi-uniform if the quotient of the covering and separation radii of each point set is bounded, see e.g. \cite{SoWaWuYu2019,MhNaWa2001,Womersley_ssd_URL}.}) distributed on the sphere. To be exact for polynomials of degree $L$, one needs to use the symmetric spherical $t$-design for $t=L$ with around $L^2/2$ points. Figure~\ref{fig:QN}(b) shows $N=498$ SD points for degree $L=31$.
  %\item    \emph{HEALPix points} (HP) \cite{Gorski_etal2005}. Differs from GL and SD, HP corresponds to a hierarchical equal area isolatitude point configuration on the sphere while the points do not satisfy polynomial-exact quadrature rule. Given a resolution $k$ (a positive integer), the number of HP points $N=12\times 2^{2k}$, and the HP partition of the resolution $k$ is nested in that of the resolution $k+1$. To make the quadrature rule pairs, we assign equal weights to HP points.
%For $j\ge0$, let $k_{j}$ be the smallest positive integer such that $2^{2j+1}\le12\times2^{2k_{j}}$. To compare with GL and SD points, we use the HP points of resolution $2^{k_{j}}$ with $N_{j}=12\times2^{2k_{j}}$ nodes at the scaling level $j$ for $j\ge0$. Figure~\ref{fig:HP} shows $N(4)=768$ HP points.
\end{enumerate}
\begin{figure}[th]
\begin{minipage}{\textwidth}
  \centering
  \begin{minipage}{0.3\textwidth}
  \centering
  \includegraphics[trim = 0mm 0mm 0mm 0mm, width=0.81\textwidth]{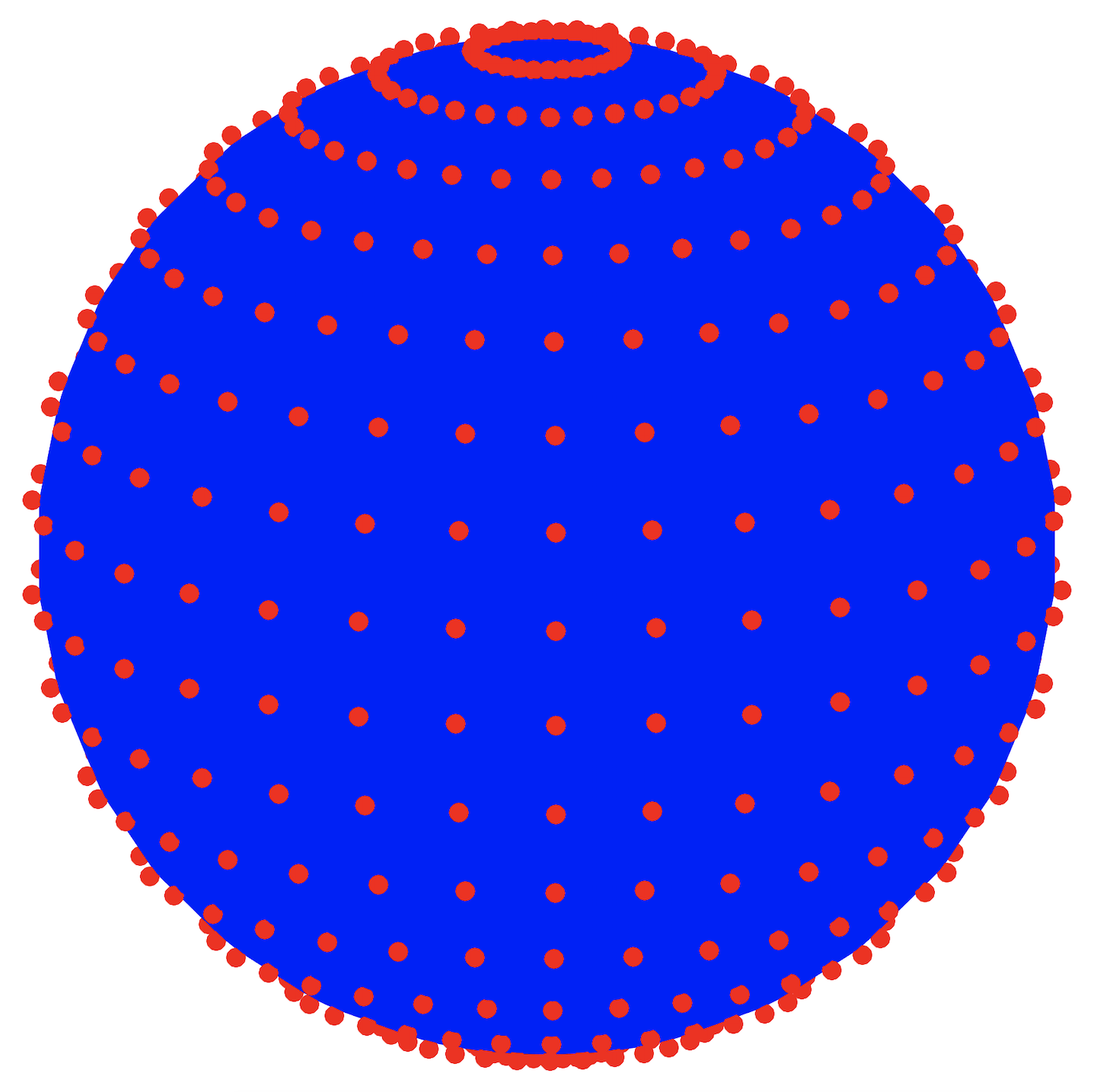}
  \subcaption{GL, $N=512$}\label{fig:GL}
  \end{minipage}\hspace{1mm}
  \begin{minipage}{0.3\textwidth}
  \centering
  \includegraphics[trim = 0mm 0mm 0mm 0mm, width=0.81\textwidth]{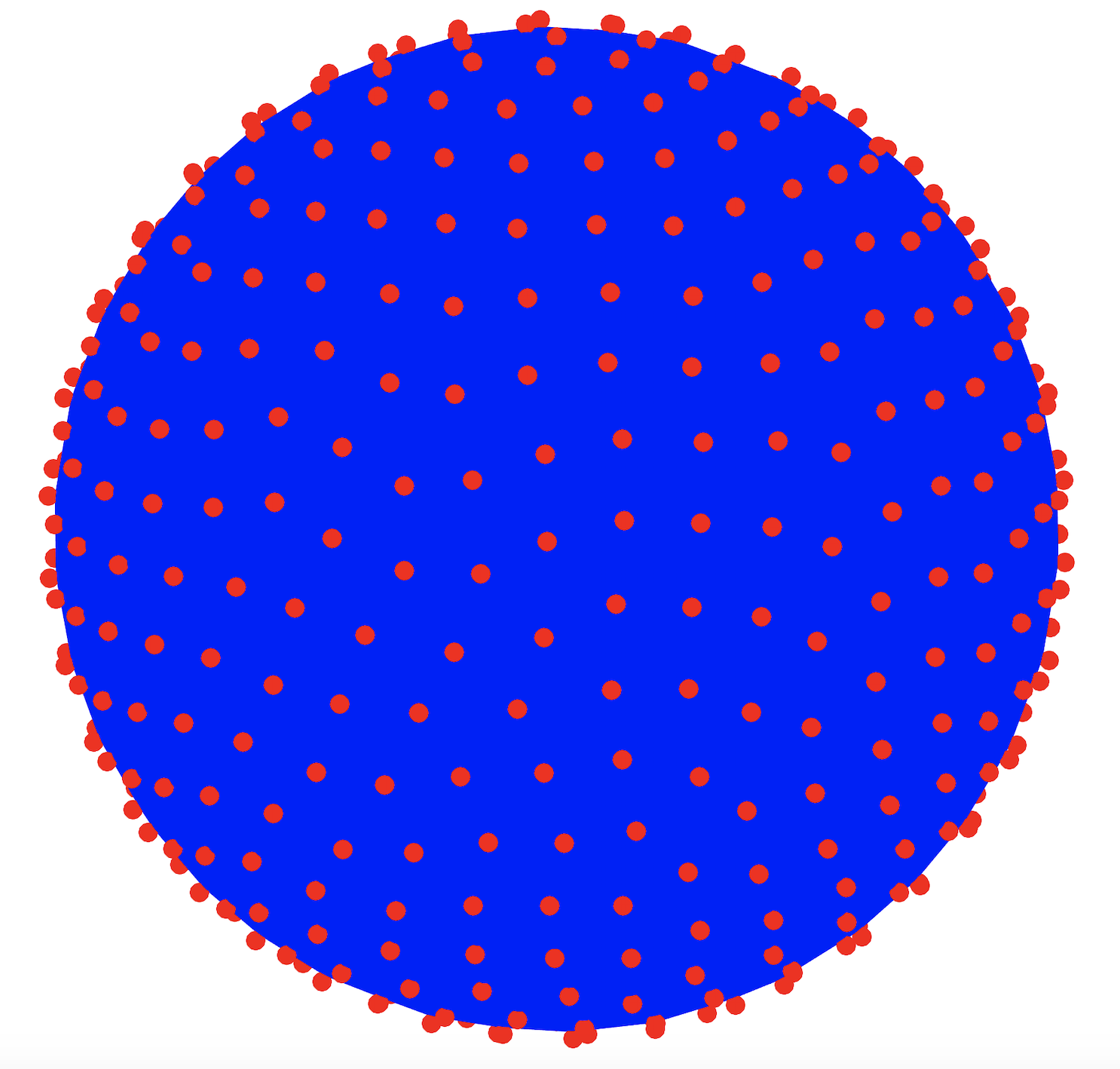}
  \subcaption{SD, $N=498$}\label{fig:SD}
  \end{minipage}
\end{minipage}
\vspace{-5mm}
\begin{minipage}{0.9\textwidth}
\caption{Point sets $\sph{2}$. (a) Nodes of Gauss-Legendre tensor rule (GL). (b) Nodes of symmetric spherical designs (SD).}
\label{fig:QN}
\end{minipage}
\end{figure}

\subsection{Tangent Fields}
To verify our theoretical results in Section 3, we use three types of simulated tangent fields as provided in \cite{FuWr2009}. All these tangent fields are generated using \emph{stream function} and \emph{velocity potential} so that we can easily split the divergence-free and curl-free parts of the field. Let $s$ and $v$ be the stream function and velocity potential, then, each of the tangent fields can be represented by
\begin{equation*}
  T=\underbrace{\mathbf{L}s}_{f^{\rm div}}+\underbrace{\nabla_{*}v}_{f^{\rm curl}}.
\end{equation*}
Recall from Section~\ref{sec:vsh} that $\mathbf{L}$ and $\nabla_{*}$ denote the surface curl and surface gradient, and $\mathbf{L}s$ and $\nabla_{*}v$ are divergence-free and curl-free.
We define the three tangent fields, as follows.

\paragraph{Tangent Field A} The stream function and velocity potential for this field are linear combinations of spherical harmonics. They can be used to generate realistic synoptic-scale meteorological wind fields.
The stream function is defined by
\begin{equation}\label{RosbyHaurwitz}
s_1(\PT{x})=-\frac{1}{\sqrt{3}}Y_{1,0}(\PT{x})+\frac{8\sqrt{2}}{3\sqrt{385}}Y_{5,4}(\PT{x}),
\end{equation}
which is known as a Rosby--Haurwitz wave and is an analytic solution to the nonlinear
barotropic vorticity equation on the sphere \cite[pp.~453--454]{Holton1973}. In  \cite{Williamson1992}, $s_1$ was used as the
initial condition for the shallow water wave equations on the sphere.
The velocity potential is given by
\begin{equation*}
  v_1(\PT{x})=\frac{1}{25}(Y_{4,0}(\PT{x})+Y_{6,-3}(\PT{x})).
\end{equation*}
Note that we can choose spherical harmonics with different degrees and different coefficients in the above formula. Here, we have used the same setting as \cite{FuWr2009}.

\paragraph{Tangent Field B} This field still uses the Rosby--Haurwitz wave (\ref{RosbyHaurwitz}) as the
stream function. But the velocity potential is a linear combination of compactly supported functions:
\begin{equation*}
  v_2(\PT{x})=\frac{1}{8}f(\PT{x};5,\pi/6,0)-\frac{1}{7}f(\PT{x};3,\pi/5,-\pi/7)+\frac{1}{9}f(\PT{x};5,-\pi/6,\pi/2)-\frac{1}{8}f(\PT{x};3,-\pi/5,\pi/3),
\end{equation*}
where
\begin{equation*}
  f(\PT{x};\sigma,\theta_c,\lambda_c)=\frac{\sigma^3}{12}\sum_{j=0}^{4}(-1)^j\left(
\begin{array}{c}
4\\
j
\end{array} \right)\left|r-\frac{(j-2)}{\sigma}\right|^3.
\end{equation*}

\emph{Tangent Field C}. Let $\PT{x}_c\in S^2$ in spherical coordinates $(\theta_c,\lambda_c)$, and $t=\PT{x}\cdot \PT{x}_c$ and $a=1-t$. Define
\begin{equation*}
  g(\PT{x};\theta_c,\lambda_c)=-\frac{1}{2}((3t+3\sqrt{2}a^{3/2}-4)+(3t^2-4t+1)\log(a)+(3t-1)a\log(\sqrt{2a}+a)).
\end{equation*}
The stream function for this tangent field is given by
\begin{equation*}
  s_3(x)=\int_{-\pi/2}^{\theta}\sin^{14}(2\xi)\mathrm{d}\xi-3g(x;\pi/4,-\pi/12),
\end{equation*}
where $\theta$ denotes the latitudinal coordinate of $\PT{x}$.
With $g$, the velocity potential is given by
\begin{equation*}
  v_3(\PT{x})=\frac{5}{2}g(\PT{x};\pi/4,0)-\frac{7}{4}g(\PT{x};\pi/6,\pi/9)-\frac{3}{2}g(\PT{x};5\pi/16,\pi/10).
\end{equation*}

\subsection{Reconstruction and Errors}
The left columns of Figures~\ref{fig:reconstruction_gl} and \ref{fig:reconstruction_sd} present the tangent fields sampled at $N=1922$ GL points and $N=1894$ SD points, respectively.
The middle columns of Figure \ref{fig:reconstruction_gl} and \ref{fig:reconstruction_sd} show the reconstructed field $T^{\rm rec}$ for the tangent field $T$ with $N=1922$ GL points and $N=1849$ SD points for evaluation. The corresponding point-wise error $T-T^{\rm rec}$ (at the sampling points) is displayed in the right columns. Here, the length and arrow indicate the scalar value and direction of the field at a spherical point. We observe that the reconstruction performance of the {\fav} is excellent: the relative error for the reconstruction is small compared with the magnitude of the original field in both GL and SD cases. The reconstructed field restores the direction and scalar value of the original field in high precision, especially when the field is sufficiently smooth. The error fields for Tangent Fields B and C show some local features of the evaluation of {\fav}: the smoother part has a smaller error, which may be partially interpreted by the localization approximation behaviour of spherical polynomials, see, e.g. \cite{Wang2016, WaLeSlWo2017, LeMh2008, LeSlWaWo2017, WaSlWo2018}.

Table~\ref{tab:relative-err} reports the relative $L_2$-errors $\|T-T^{\rm rec}\|_2/\|T\|_2$ for the reconstruction with degree $L$ up to $150$ in the cases of GL and SD. We observe that the {\fav}s with GL and SD exhibit similar error orders for each tangent field.
These results illustrate that the {\fav} with a polynomial-exact quadrature rule is precise. In both GL and SD cases, for each degree, the error increases as the smoothness of the field decreases (from A to C). It means that the accuracy of the algorithm improves as the smoothness of the tangent field reduces. The error here is the superposition of the errors for forward {\fav} and adjoint {\fav}. They are determined by the approximation error of numerical integration by quadrature rule and the truncation error of vector spherical harmonic expansion, both of which increase as the smoothness of the tangent field reduces, see, e.g. \cite{Brauchart_etal2015, BrSaSlWo2014, FrGeSc1998}.

We use the same example to show the error incurred by repeated use of {\fav}. In the experiment, we repeat the forward and adjoint transforms twice to obtain the sequence $T_0\to \{\widehat{T}_0,\widetilde{T}_0\}\to T_1\to \{\widehat{T}_1,\widetilde{T}_1\}\to T_2$. Here $T_0$ is an original field on the sphere with the vector spherical harmonic coefficients $\{\widehat{T}_0,\widetilde{T}_0\}$; $T_1$ is the reconstructed field from the coefficients of $T_0$ via adjoint {\fav}. $T_2$ is the adjoint {\fav} for the coefficients $\{\widehat{T}_1,\widetilde{T}_1\}$ of the field $T_1$. 
In Table~\ref{tab:err repeated transforms}, we evaluate the errors for the above transforms on the tangent fields A, B, C over GL and SD points for vector spherical harmonics up to degree $40$ respectively, by computing the infimum norms $\|T_1-T_0\|_{\infty}$, $\|T_2-T_0\|_{\infty}$, $\|T_2-T_1\|_{\infty}$, and $\max\bigl\{\|\widehat{T}_1-\widehat{T}_0\|_{\infty},\|\widetilde{T}_1-\widetilde{T}_0\|_{\infty}\bigr\}$. 
For all the three fields, $\|T_1-T_0\|_{\infty}$ and $\|T_2-T_0\|_{\infty}$ are at the same order, and the errors are bigger as the original tangent field becomes rougher. This is due to that they contain the projection error of the spherical harmonic approximation to the tangent field.
 $\|T_2-T_1\|_{\infty}$ is at the order $10^{-12}$ which is the same to that of the scalar spherical harmonic transforms NFFT that we have used in {\fav}. This error indeed comes from the error of the NFFT since $T_1$ and $T_2$ are the reconstructed fields that are expanded by vector spherical harmonics at the same degree. The error for the two sets of coefficients $\max\bigl\{\|\widehat{T}_1-\widehat{T}_0\|_{\infty},\|\widetilde{T}_1-\widetilde{T}_0\|_{\infty}\bigr\}$ is at the order equal or less than $10^{-12}$, which is the analogue to that of $\|T_2-T_1\|_{\infty}$ dependent upon the NFFT's precision.
\begin{table}[t]
\centering
\begin{minipage}{0.97\textwidth}
\centering
\begin{tabular}{l*{12}{c}c}
\toprule
&$\mbox{Quadrature}$  &   $L=10$    &   $L=30$   &   $L=50$    &  $L=100$     &   $L=120$   &   $L=150$    \\
\midrule
 \multirow{2}{*}{Tangent Field A}& GL  &  8.6133e-12 &  4.3287e-12 &  3.1993e-12  & 2.6626e-12  & 2.5678e-12  & 2.4932e-12\\
   & SD & 5.3367e-12  & 3.2721e-12  & 2.9385e-12 &  2.5873e-12 &  4.0959e-12  & 1.4713e-10 \\
 \midrule
 \multirow{2}{*}{Tangent Field B}& GL    &7.5102e-02  & 2.9206e-03  & 6.5037e-04  & 1.0389e-04  & 7.1028e-05 &  3.5907e-05 \\
                          & SD &  7.1155e-02 &  3.0262e-03  & 6.9277e-04  & 1.1280e-04  & 7.6203e-05  & 4.0761e-05 \\
 \midrule
 \multirow{2}{*}{Tangent Field C}& GL    &2.6919e-01  & 5.7499e-03  & 1.9874e-03  & 4.7176e-04  & 3.3860e-04 &  2.5335e-04 \\
                          & SD  & 1.7225e-01  & 5.6764e-03 &  2.1336e-03 &  5.5568e-04 &  4.1357e-04 &  2.4919e-04 \\
 \bottomrule
\end{tabular}
\end{minipage}
\begin{minipage}{0.8\textwidth}
\vspace{3mm}
\caption{Relative $L_{2}$-errors $\|T-T^{\rm rec}\|_2/\|T\|_2$ in GL and SD cases for various numbers of nodes.}
\label{tab:relative-err}
\end{minipage}
\end{table}

%%% repeated transforms
\begin{table}[t]
\centering
\begin{minipage}{0.97\textwidth}
\centering
\begin{tabular}{l*{12}{c}c}
\toprule
&$\mbox{Quadrature}$  &   $\|T_1-T_0\|_{\infty}$    &   $\|T_2-T_0\|_{\infty}$   &   $\|T_2-T_1\|_{\infty}$    &  $\max\bigl\{\|\widehat{T}_1-\widehat{T}_0\|_{\infty},\|\widetilde{T}_1-\widetilde{T}_0\|_{\infty}\bigr\}$      \\
\midrule
 \multirow{2}{*}{Tangent Field A}& GL  &  2.0874e-12 &  4.0647e-12 &  2.0074e-12  & 2.9352e-12  \\
   & SD & 4.0994e-12  & 8.2248e-12  & 4.1511e-12 &  2.9310e-13 \\
 \midrule
 \multirow{2}{*}{Tangent Field B}& GL   & 1.4946e-03  & 1.4946e-03  & 2.8605e-12  & 2.9400e-12   \\
                        & SD &  2.0078e-03 &  2.0078e-03  & 4.5585e-12  & 3.0065e-13  \\
 \midrule
 \multirow{2}{*}{Tangent Field C}& GL    & 1.9968e-02  & 1.9968e-02& 1.2819e-11  & 3.0886e-12 \\
                          & SD  & 3.8045e-02  & 3.8045e-02 &  5.6859e-12 &  6.1313e-13  \\
 \bottomrule
\end{tabular}
\end{minipage}
\begin{minipage}{0.9\textwidth}
\vspace{3mm}
\caption{Errors when using forward and adjoint FaVeST repeatedly: $T_0\to \bigl\{\widehat{T}_0,\widetilde{T}_0\bigr\}\to T_1\to \bigl\{\widehat{T}_1,\widetilde{T}_1\bigr\}\to T_2$, for the tangent fields A, B, C over GL and SD points, respectively.}
\label{tab:err repeated transforms}
\end{minipage}
\end{table}

%% computational complexity
\subsection{Computational Complexity}
\begin{figure}[htbp!]
\centering
\begin{minipage}{\textwidth}
  \subcaptionbox{Reconstruction of Tangent Field A with GL}
  {\includegraphics[width=0.31\linewidth]{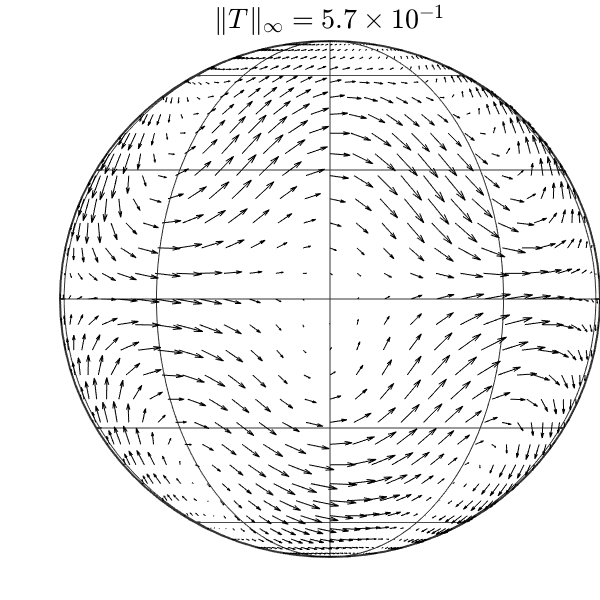}\quad
    \includegraphics[width=0.31\linewidth]{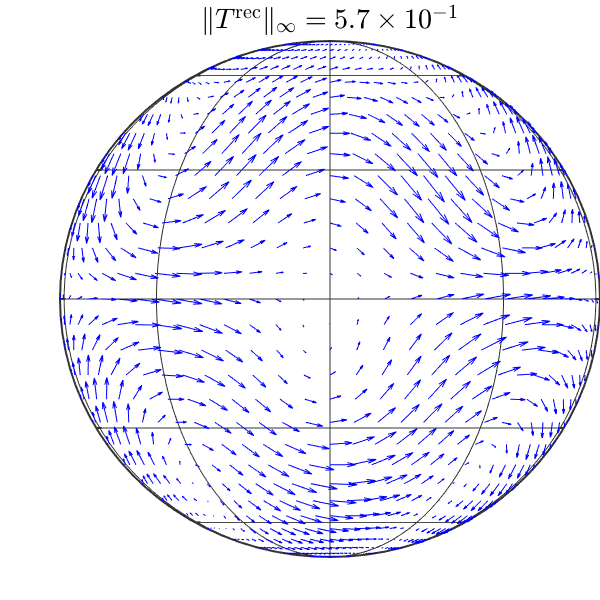}\quad
    \includegraphics[width=0.31\linewidth]{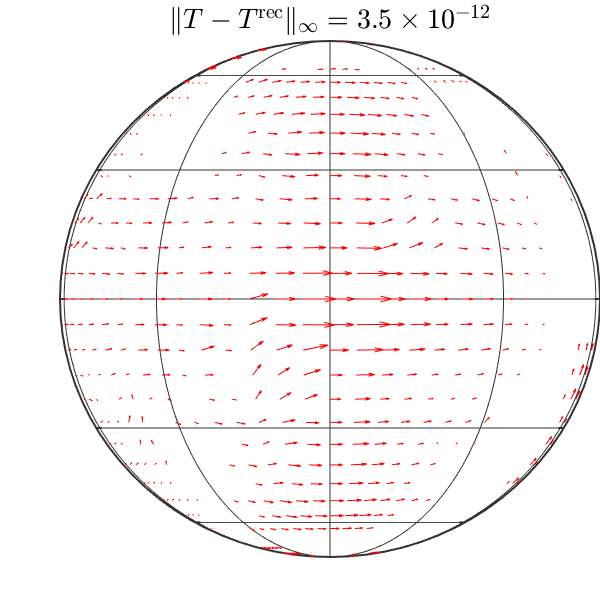}}\\[4mm]
    \subcaptionbox{Reconstruction of Tangent Field B with GL}
  {\includegraphics[width=0.31\linewidth]{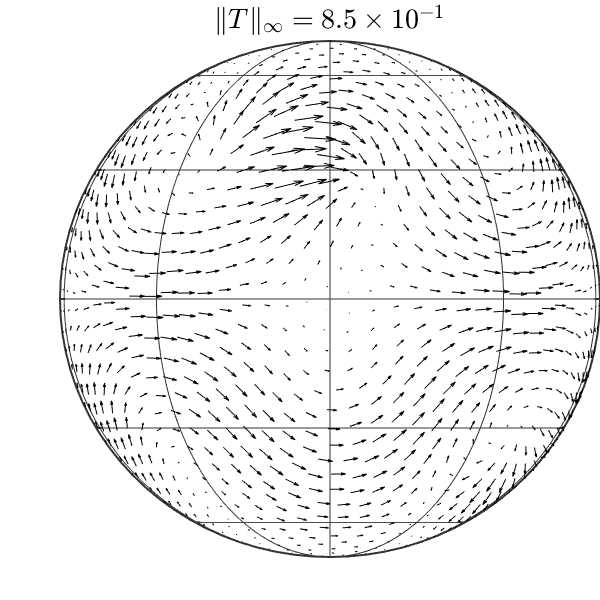}\quad
    \includegraphics[width=0.31\linewidth]{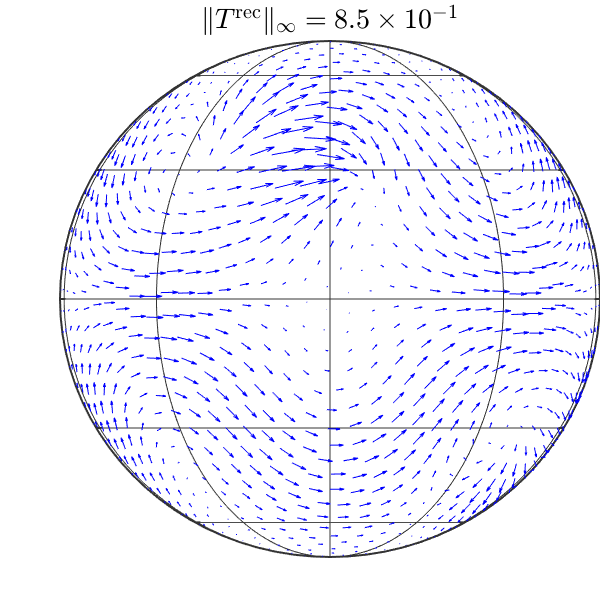}\quad
    \includegraphics[width=0.31\linewidth]{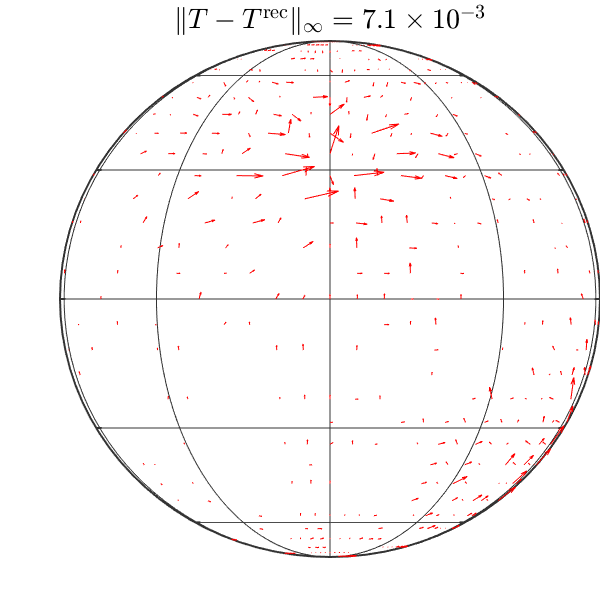}}\\[4mm]
    \subcaptionbox{Reconstruction of Tangent Field C with GL}
  {\includegraphics[width=0.31\linewidth]{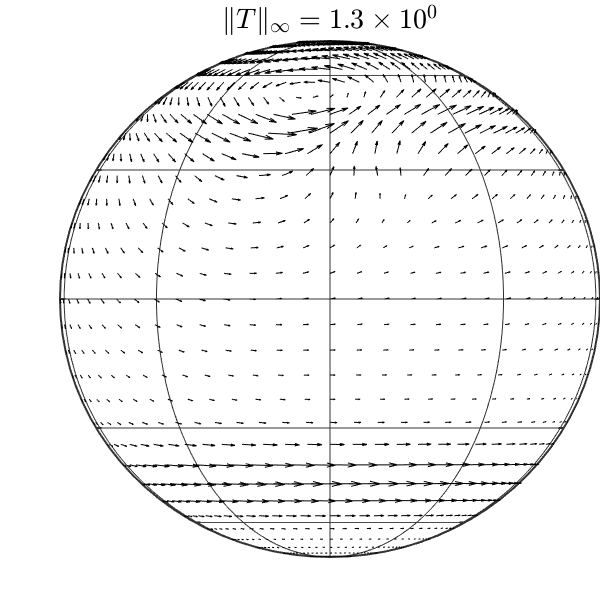}\quad
    \includegraphics[width=0.31\linewidth]{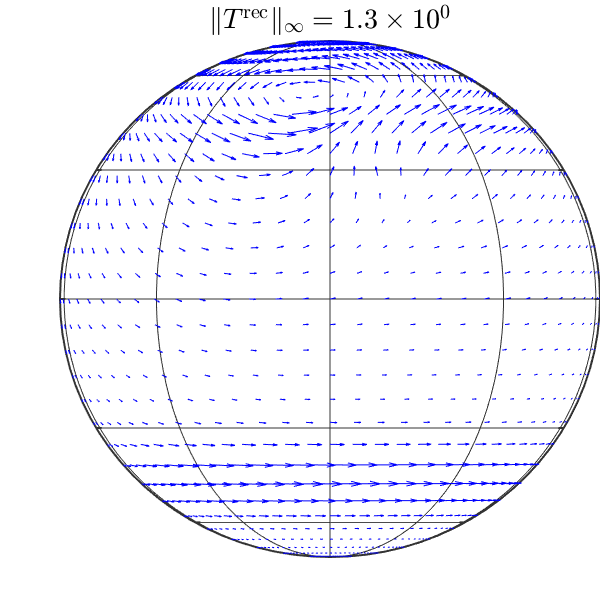}\quad
    \includegraphics[width=0.31\linewidth]{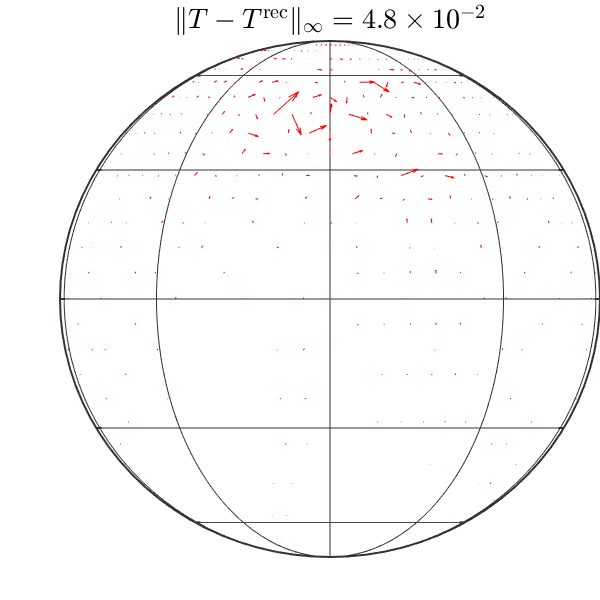}}
\end{minipage}
\vspace{3mm}
\begin{minipage}{0.9\textwidth}
    \caption{Visualization of reconstructed tangent field and error by {\fav} with quadrature rule of Gauss-Legendre tensor (GL). The first and the second columns show the target tangent field $T$ and reconstructed field $T^{\rm rec}$. The third column shows the error $T-T^{\rm rec}$. All plots are the orthographic projections of the fields evaluated at $N = 1922$ GL nodes. The normalized maximum norms for $T$, $T^{\rm rec}$ and $T-T^{\rm rec}$ are displayed in the titles.}\label{fig:reconstruction_gl}
\end{minipage}
\end{figure}

%\clearpage
\begin{figure}[htbp!]
\centering
\begin{minipage}{\textwidth}
 \subcaptionbox{Reconstruction of Tangent Field A with SD}
{\includegraphics[width=0.31\linewidth]{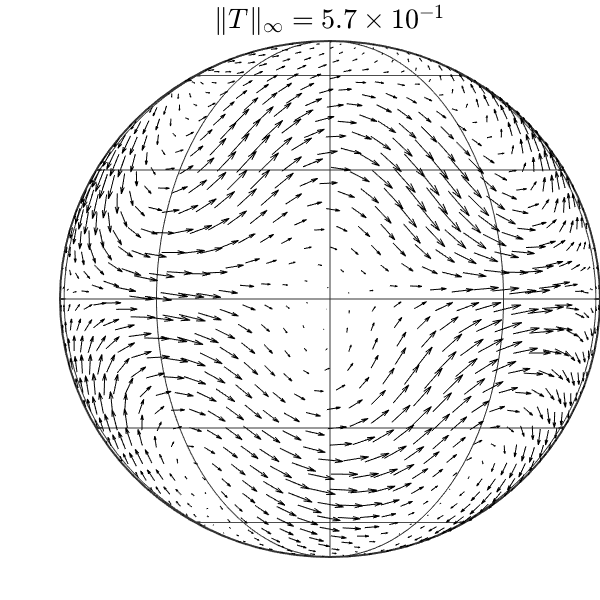}\quad
    \includegraphics[width=0.31\linewidth]{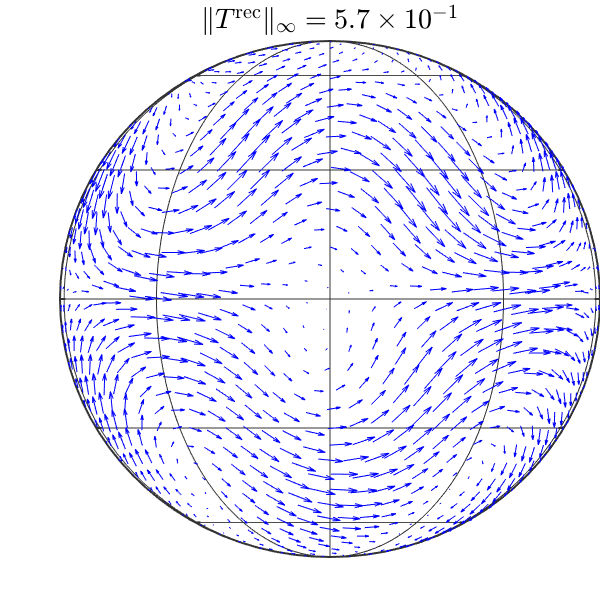}\quad
    \includegraphics[width=0.31\linewidth]{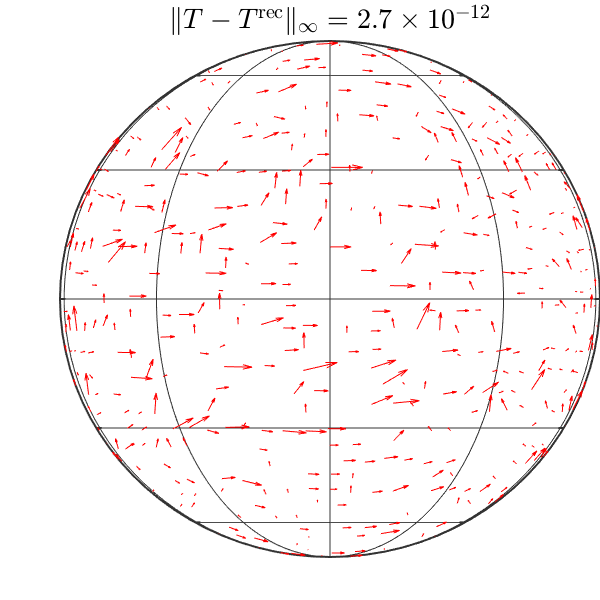}}\\[4mm]
    \subcaptionbox{Reconstruction of Tangent Field B with SD}
  {\includegraphics[width=0.31\linewidth]{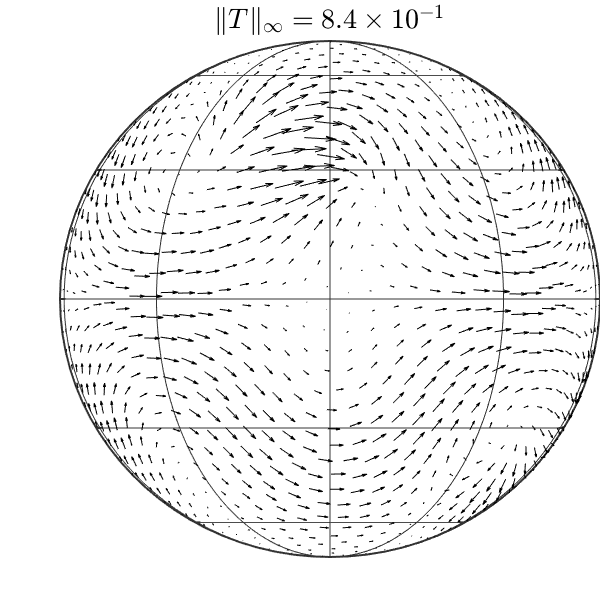}\quad
    \includegraphics[width=0.31\linewidth]{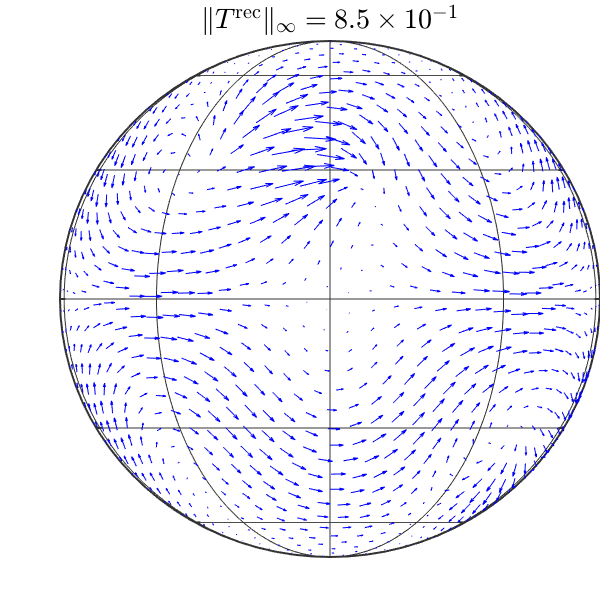}\quad
    \includegraphics[width=0.31\linewidth]{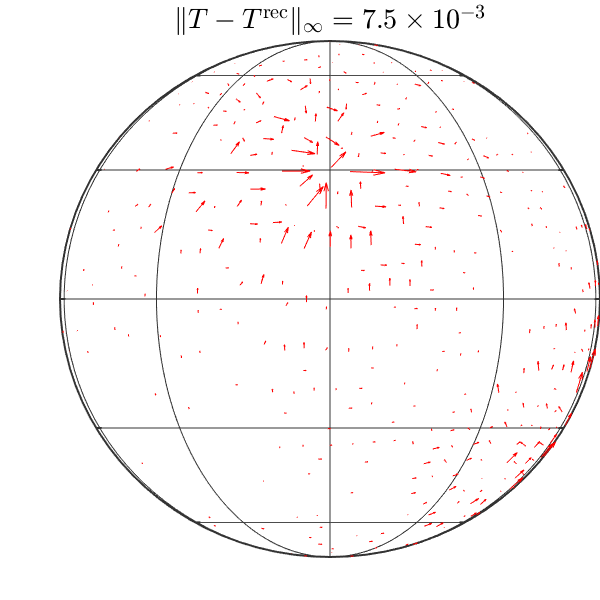}}\\[4mm]
    \subcaptionbox{Reconstruction of Tangent Field C with SD}
  {\includegraphics[width=0.31\linewidth]{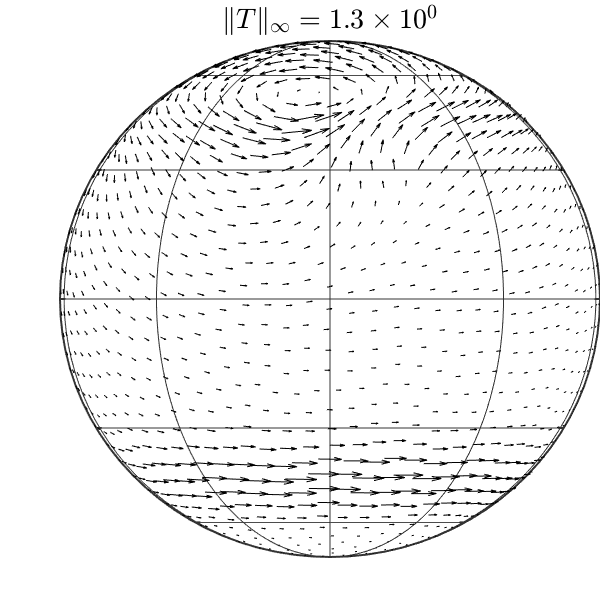}\quad
    \includegraphics[width=0.31\linewidth]{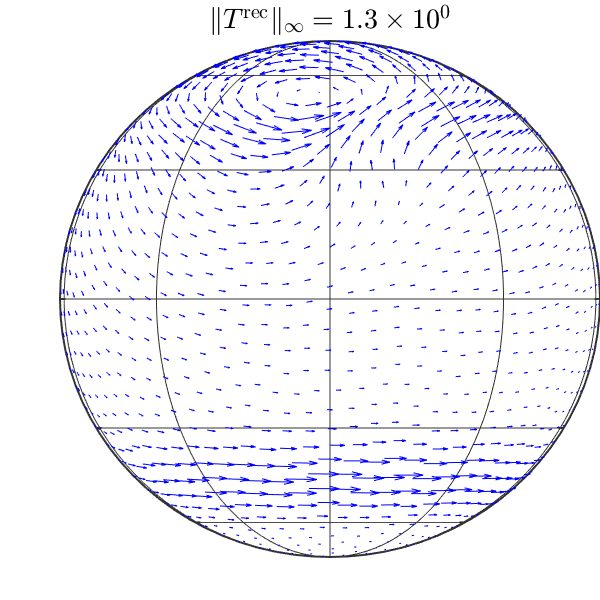}\quad
    \includegraphics[width=0.31\linewidth]{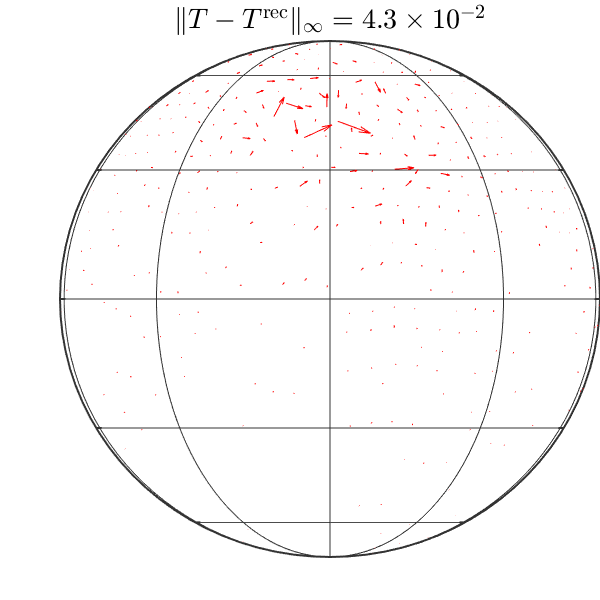}}
\end{minipage}
\begin{minipage}{0.9\textwidth}
    \caption{Visualization of reconstructed tangent field and error by {\fav} with quadrature rule of symmetric spherical $t$-designs (SD). The first and the second columns show the target tangent field $T$ and reconstructed field $T^{\rm rec}$. The third column shows the error $T-T^{\rm rec}$. All plots are the orthographic projections of the fields evaluated at $N = 1,849$ SD nodes. The normalized maximum norms for $T$, $T^{\rm rec}$ and $T-T^{\rm rec}$ are displayed in the titles.}\label{fig:reconstruction_sd}
\end{minipage}
\end{figure}

To test the time complexity of {\fav}, we carry out experiments with different numbers of GL points: let $L=250k+500$, $k=0,1,\ldots,8$, which corresponds to
$N_k\approx2L_k^2$ GL nodes for FwdVSHT, and $M_k=L_k^2+L_k$ coefficients for AdjVSHT. The CPU time consumption by {\fav}s is reported in Table~\ref{tab:timecost}. As indicated by the quotient ratios (in the round brackets) of the CPU times for degrees $L_k$ and $L_{k-1}$, the computational time is almost proportional to $N_k$ and $M_k$ for forward and adjoint {\fav}s. This is also confirmed by Figure~\ref{fig:timecost}.
 Figure~\ref{fig:timecost}(a) shows the trend of CPU time changing with the increase of the number of GL points for forward {\fav}. 
 Figure~\ref{fig:timecost}(b) shows the trend of CPU time varying with the increase of coefficient number for adjoint {\fav}. The fitting curves for the forward and adjoint {\fav} cases have the powers (up to a constant) $N^{1.2}$ and $M^{1.2}$, which means that both the forward and adjoint {\fav}s have near linear computational complexity. This is consistent with the analysis in Section~\ref{sec:complexity_error}.
 % table 1
 \begin{table}[h]
\centering
\begin{minipage}{\textwidth}
\centering
\small
\begin{tabular}{l*{11}{c}c}
\toprule
$L$      & 250  &   500       &  750       &   1000    &  1250      &   1500     &   1750    &  2000    & 2250  \\
\midrule
 $N$          &   126,002     &   502,002    &  1,128,002    &  2,004,002  &   3,130,002  &   4,506,002   &  6,132,002 &  8,008,002 & 10,134,002\\

 $t^{\mathrm{fwd}}$   & 0.40(4.56)  & 1.81(2.33) & 4.22(1.73) & 7.2903(2.08)  & 15.16(1.47)  &  22.30(1.47) & 32.87(1.26)  & 41.51(1.51)  & 62.74 \\\midrule

 $M$   & 63,000  & 251,000 & 564,000 &  1,002,000  & 1,565,000 & 2,253,000 &   3,066,000 & 4,004,000  & 5,067,000\\

 $t^{\mathrm{adj}}$   & 0.83(4.64)  & 3.87(2.46) & 9.52(1.88) & 17.89(1.90) & 33.97(1.48) & 50.32(1.47) & 73.94(1.32) & 97.32(1.49) & 144.70\\
\bottomrule
\end{tabular}
\end{minipage}
\begin{minipage}{0.9\textwidth}
\vspace{3mm}
\caption{Forward {\fav} CPU time $t^{\rm fwd}$ v.s. number of points and adjoint {\fav} CPU time $t^{\rm adj}$ v.s. number of coefficients. For $L=L_k=250k+500$, $k=0,1,\dots,8$, forward {\fav} uses Gauss-Legendre tensor rule which has $N=N_k\approx 2L_k^2$ nodes, and adjoint {\fav} uses $M=M_k=L_k^2+L_k$ coefficients. The numbers inside the brackets are the ratios $\frac{t^{\mathrm{fwd}}(N_{k})}{t^{\mathrm{fwd}}(N_{k-1})}$ and $\frac{t^{\mathrm{adj}}(M_{k})}{t^{\mathrm{adj}}(M_{k-1})}$. The numerical test is run under Intel Core i7-6700 CPU @ 3.40GHz with 16GB RAM in Windows 10.}
\label{tab:timecost}
\end{minipage}
\end{table}
% fig 4
\begin{figure}[htbp!]
\begin{minipage}{\textwidth}
	\centering
\begin{minipage}{\textwidth}
  \centering
  \begin{minipage}{0.46\textwidth}
  \centering
  \includegraphics[trim = 0mm 0mm 0mm 0mm, width=\textwidth]{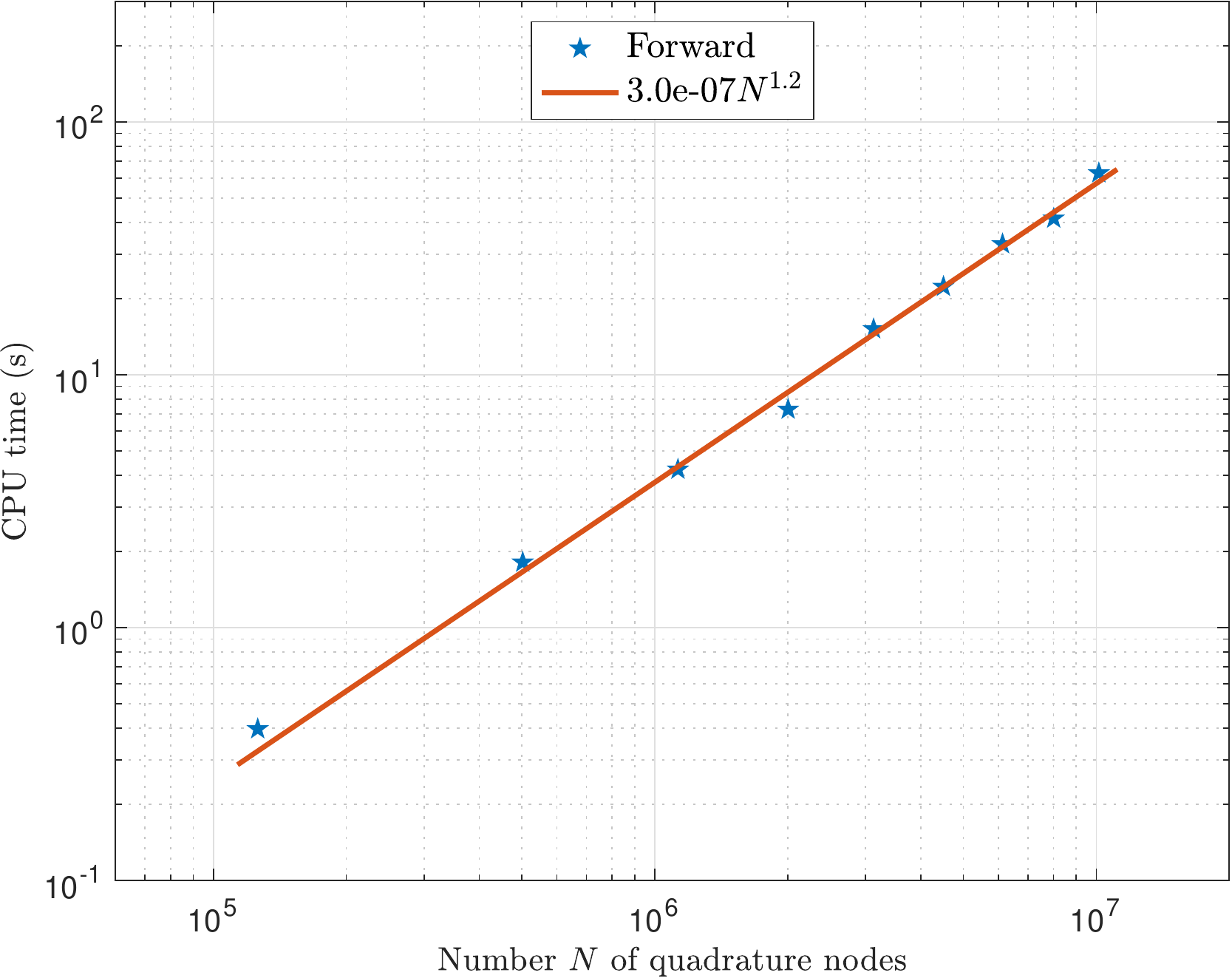}\\
  \subcaption{Forward {\fav}}\label{fig:time_favest_fwd}
  \end{minipage}\hspace{8mm}
  \begin{minipage}{0.46\textwidth}
  \centering
  \includegraphics[trim = 0mm 0mm 0mm 0mm, width=\textwidth]{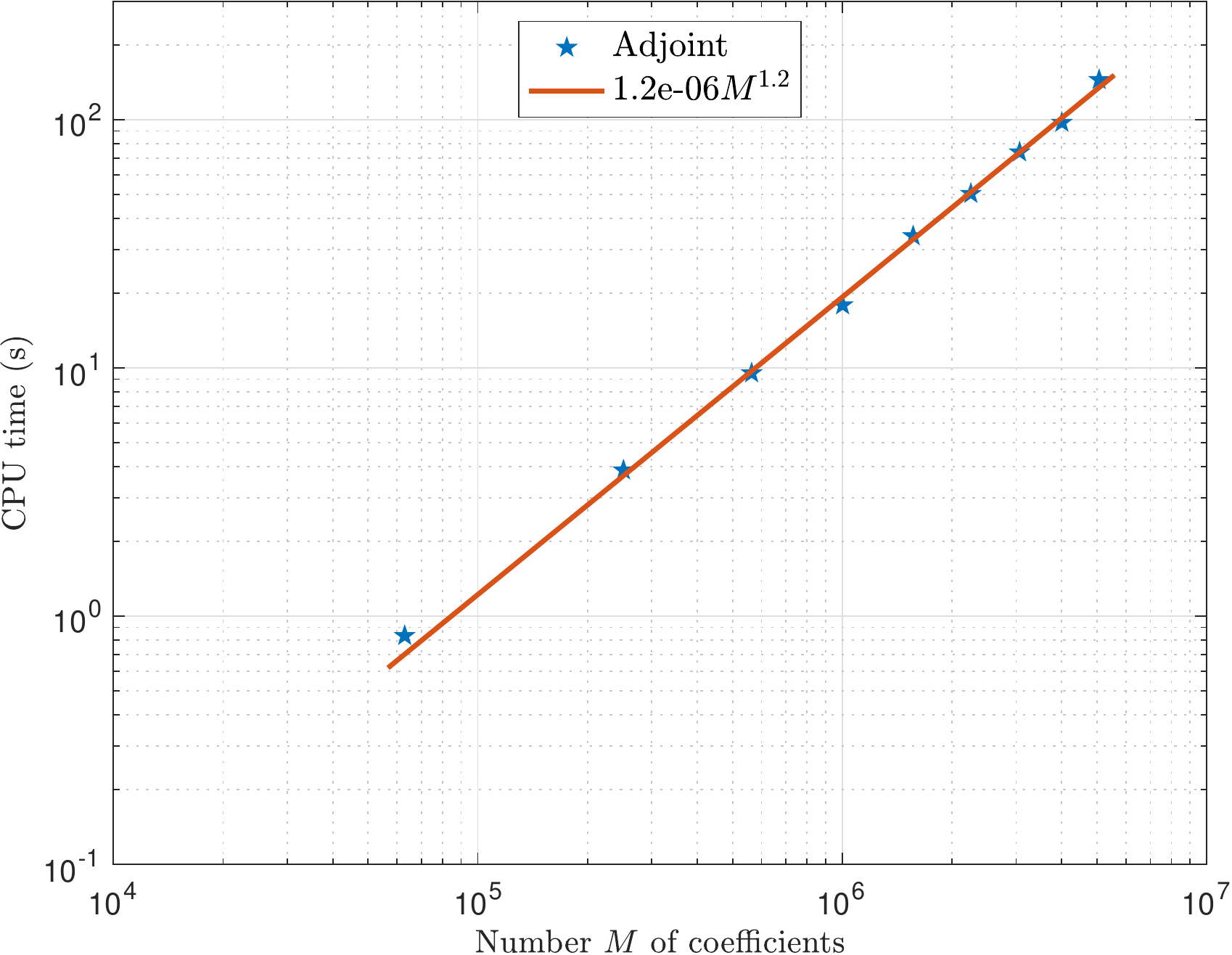}\\
  \subcaption{Adjoint {\fav}}\label{fig:time_faVest_adj}
  \end{minipage}
\end{minipage}
\begin{minipage}{0.9\textwidth}
\vspace{-1mm}
\caption{CPU time of Forward and Adjoint {\fav}s.}
\label{fig:timecost}
\end{minipage}
\end{minipage}
\end{figure}

\subsection{Numerical Stability}
We test the numerical stability of {\fav}. The stability can be measured by the max ratio of the discrete Fourier transform \eqref{eq:fwdvsht2} and the sum of absolute values of individual terms in \eqref{eq:almblmviaFlm}. See for example \cite{higham2002accuracy}. For example, for forward {\fav}, the ratio is simplified as 
\begin{equation}\label{eq:r_stable}
	\widehat{r}_{L,N} = \max_{\ell\leq L,|m|=1,\dots,\ell
	\atop k=1,\dots,N}\frac{|\dsh^{*}(\bx_k)|}{\widehat{V}_{\ell,m,k}}
	\qquad \widetilde{r}_{L,N} = \max_{\ell\leq L,|m|=1,\dots,\ell
	\atop k=1,\dots,N}\frac{|\dsh^{*}(\bx_k)|}{\widetilde{V}_{\ell,m,k}},
\end{equation}
where 
\begin{align*}
	\widehat{V}_{\ell,m,k} &= \sqrt{2}\Bigl(
	\xi_{\ell-1,m-1}^{(1)}\bigl|\shY[\ell-1,m-1]^*(\bx_k)\bigr|+\xi_{\ell+1,m-1}^{(2)}\bigl|\shY[\ell+1,m-1]^*(\bx_k)\bigr|
	+\xi_{\ell-1,m+1}^{(3)}\bigl|\shY[\ell-1,m+1]^*(\bx_k)\bigr|+\xi_{\ell+1,m+1}^{(4)}\bigl|\shY[\ell+1,m+1]^*(\bx_k)\bigr|\\
	& \quad +\xi_{\ell-1,m}^{(5)}\bigl|\shY[\ell-1,m]^*(\bx_k)\bigr|+\xi_{\ell+1,m}^{(6)}\bigl|\shY[\ell+1,m]^*(\bx_k)\bigr|
	\Bigr)\\
	\widetilde{V}_{\ell,m,k} &= \sqrt{2}\Bigl(\mu_{\ell,m-1}^{(1)}\bigl|\shY[\ell,m-1]^*(\bx_k)\bigr|
	+\mu_{\ell,m+1}^{(3)}\bigl|\shY[\ell,m+1]^*(\bx_k)\bigr|
	+\mu_{\ell,m}^{(2)}\bigl|\shY[\ell,m]^*(\bx_k)\bigr|
	\Bigr).
\end{align*}
The top row of Figure~\ref{fig:stability} shows the $\widehat{r}_{5,N}$ and $\widetilde{r}_{5,N}$ for $N$ from 100 up to 10,000. The errors in two pictures in the top panel both have an envelope with an increasing trend as the number of points $N$ increases.
The bottom row of Figure~\ref{fig:stability} shows the $\widehat{r}_{5,N}/N$ and $\widetilde{r}_{5,N}/N$ for $N$ from 100 up to 10,000. The errors in two pictures in the bottom panel both have an envelope with a decreasing trend as $N$ increases.
Similar trending results with respect to $N$ are illustrated in Figure~\ref{fig:stability L10}, where the top row shows the $\widehat{r}_{10,N}$ and $\widetilde{r}_{10,N}$, and the bottom row shows the the $\widehat{r}_{10,N}/N$ and $\widetilde{r}_{10,N}/N$, for $N$ from 100 up to 100,000.  
They illustrate that forward {\fav} has good stability. Computing similar quantities as \eqref{eq:r_stable} we can show the adjoint {\fav} also has good stability.

\begin{figure}[t]
\centering
\begin{minipage}{\textwidth}
\centering	
	\begin{minipage}{0.45\textwidth}
	\centering
		\includegraphics[width=\textwidth]{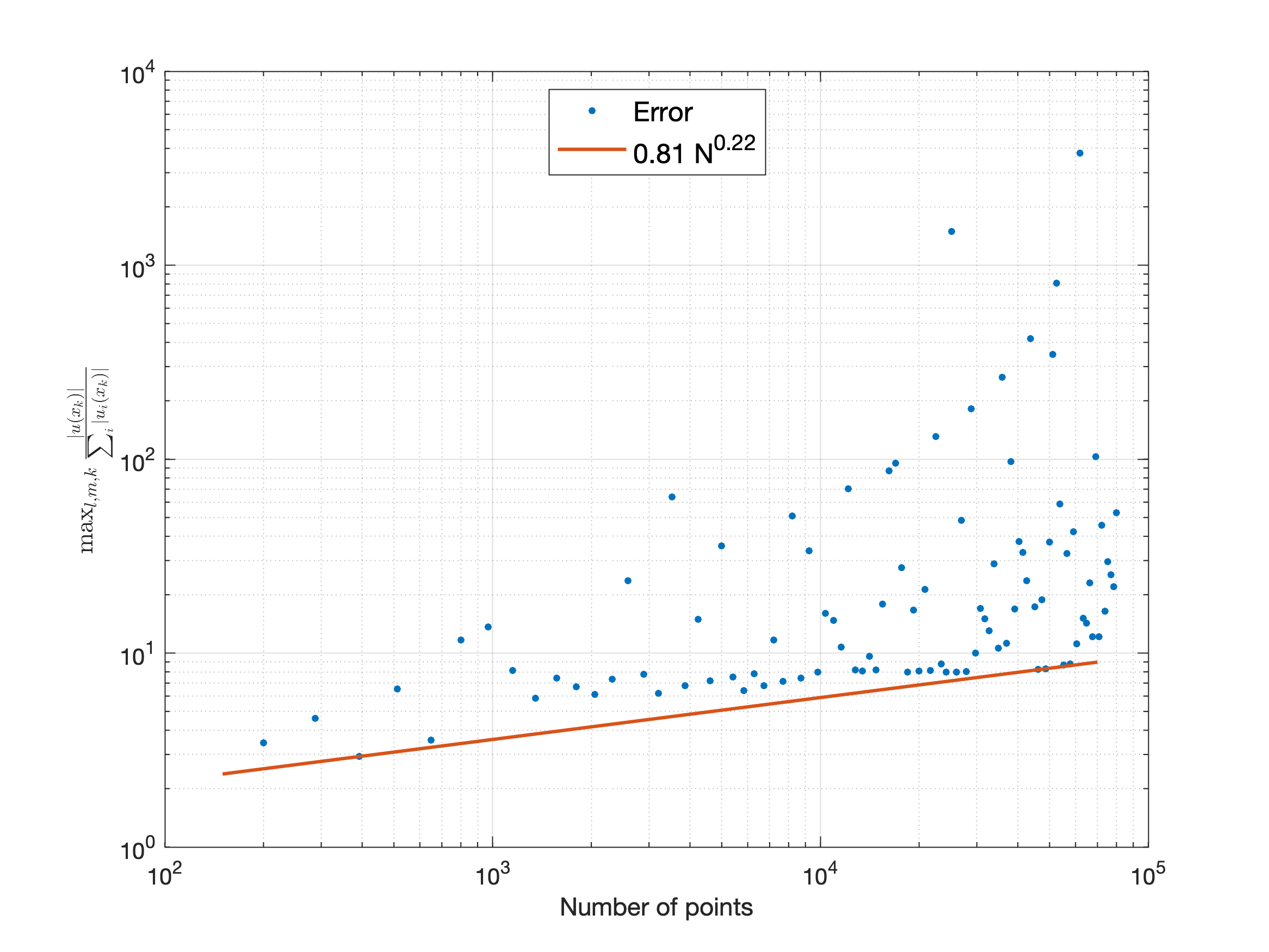}
		\subcaption{$\widehat{r}_{5,N}$}
	\end{minipage}
	\begin{minipage}{0.45\textwidth}
	\centering
		\includegraphics[width=\textwidth]{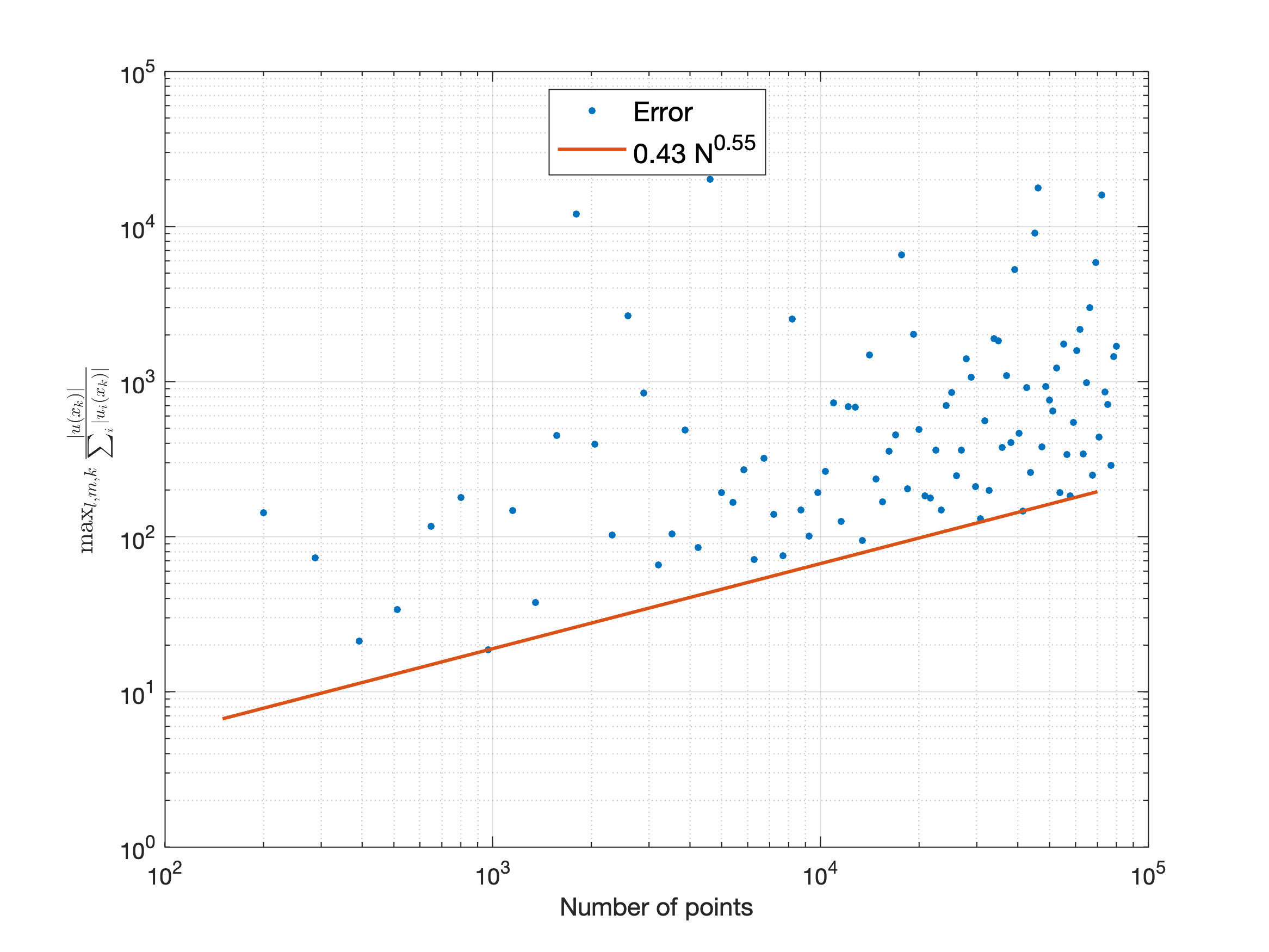}
		\subcaption{$\widetilde{r}_{5,N}$}
	\end{minipage}
\end{minipage}
\begin{minipage}{\textwidth}
\centering	
	\begin{minipage}{0.45\textwidth}
	\centering
		\includegraphics[width=\textwidth]{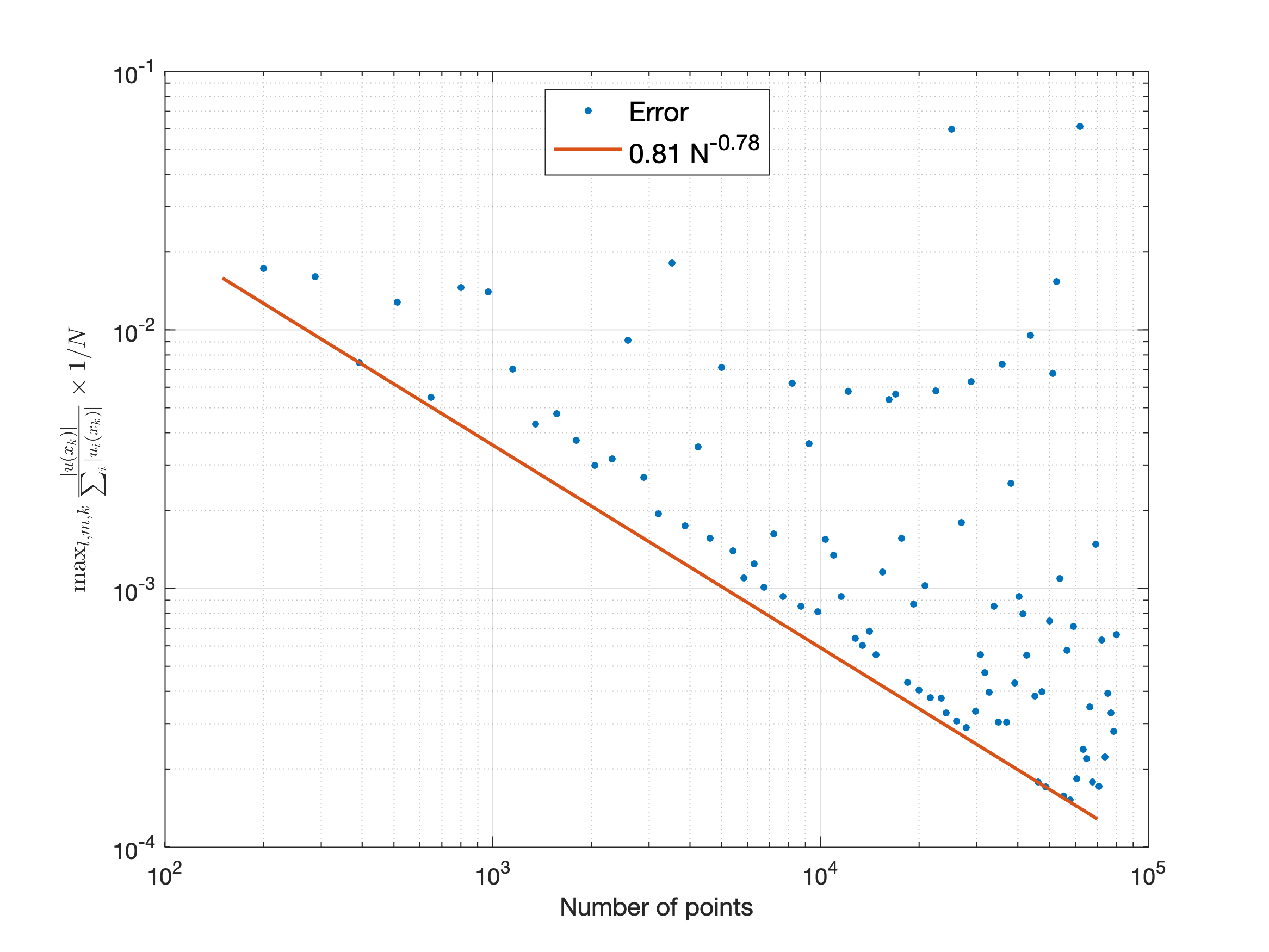}
		\subcaption{$\widehat{r}_{5,N}/N$}
	\end{minipage}
	\begin{minipage}{0.45\textwidth}
	\centering
		\includegraphics[width=\textwidth]{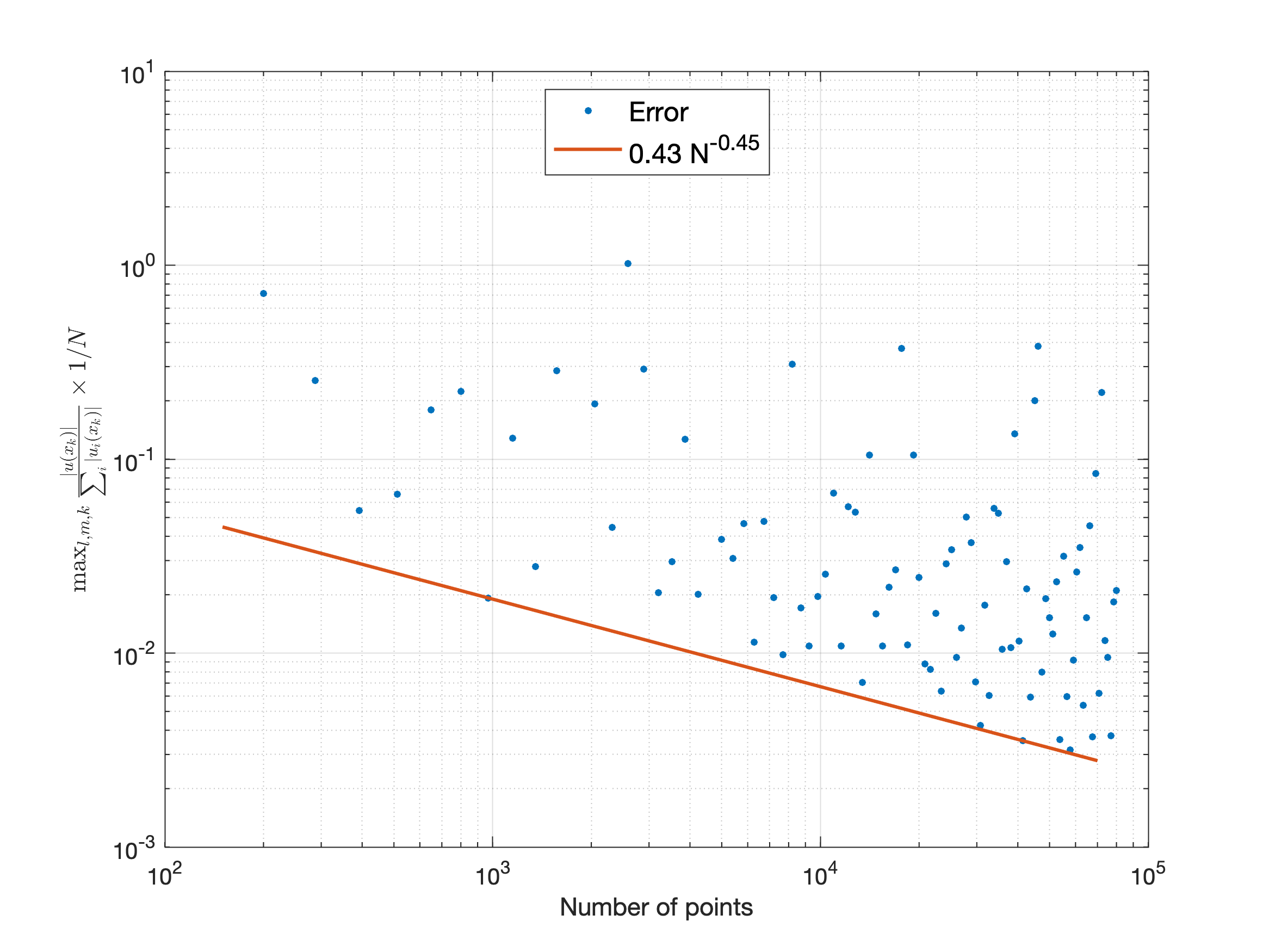}
		\subcaption{$\widetilde{r}_{5,N}/N$}
	\end{minipage}
\end{minipage}
\begin{minipage}{0.9\textwidth}
\caption{Stability test for forward {\fav}. The top row shows that the quantities $\widehat{r}_{5,N}$ and $\widetilde{r}_{5,N}$ have an envelope with positive power rate, which indicates their increasing trend as number of points $N$ increases. The bottom row shows that the quantities $\widehat{r}_{5,N}/N$ and $\widetilde{r}_{5,N}/N$ both have an envelope with negative power rate, which indicates their decreasing trend as $N$ increases.}\label{fig:stability}
\end{minipage}
\end{figure}

\begin{figure}[t]
\centering
\begin{minipage}{\textwidth}
\centering	
	\begin{minipage}{0.45\textwidth}
	\centering
		\includegraphics[width=\textwidth]{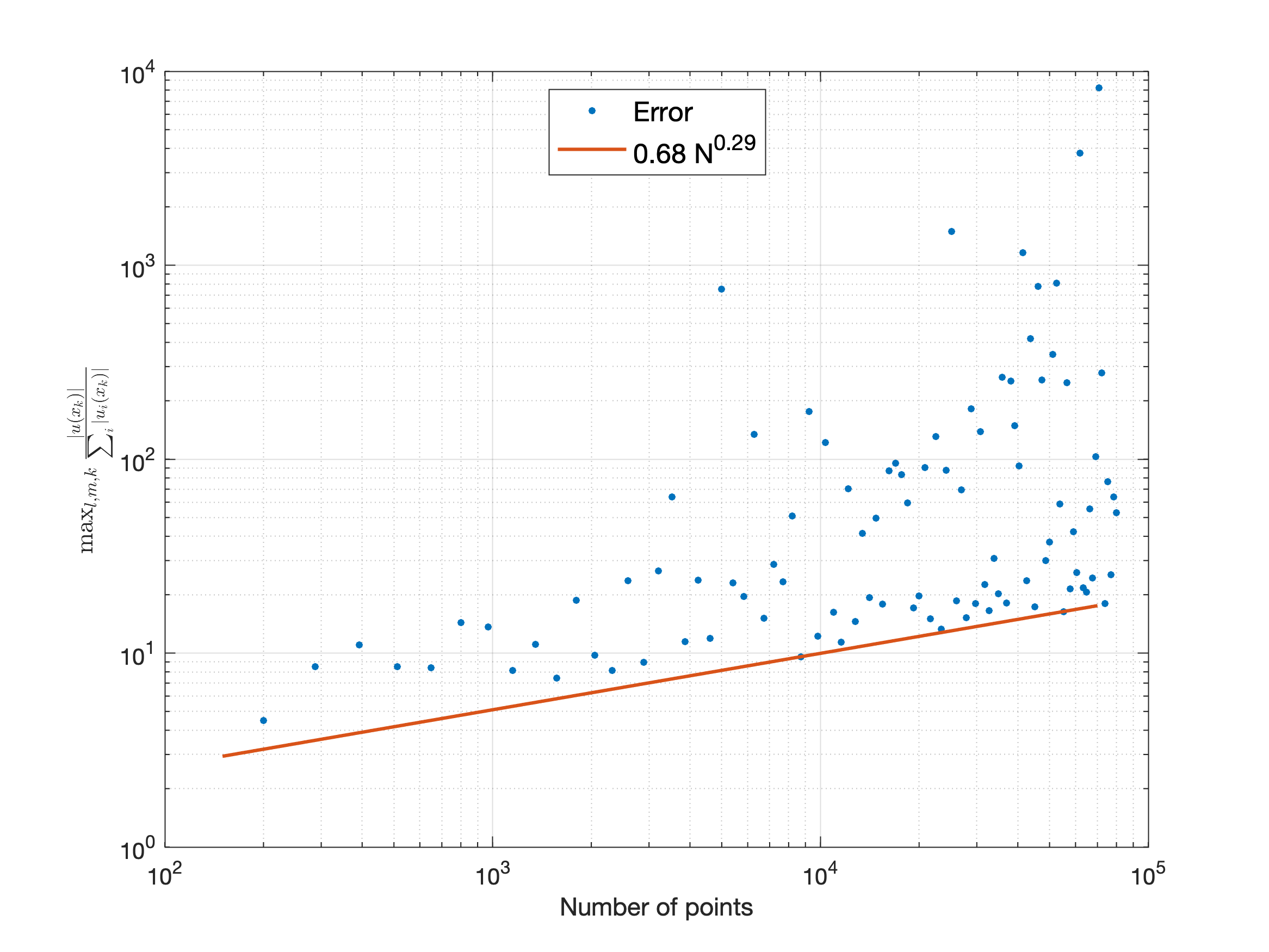}
		\subcaption{$\widehat{r}_{10,N}$}
	\end{minipage}
	\begin{minipage}{0.45\textwidth}
	\centering
		\includegraphics[width=\textwidth]{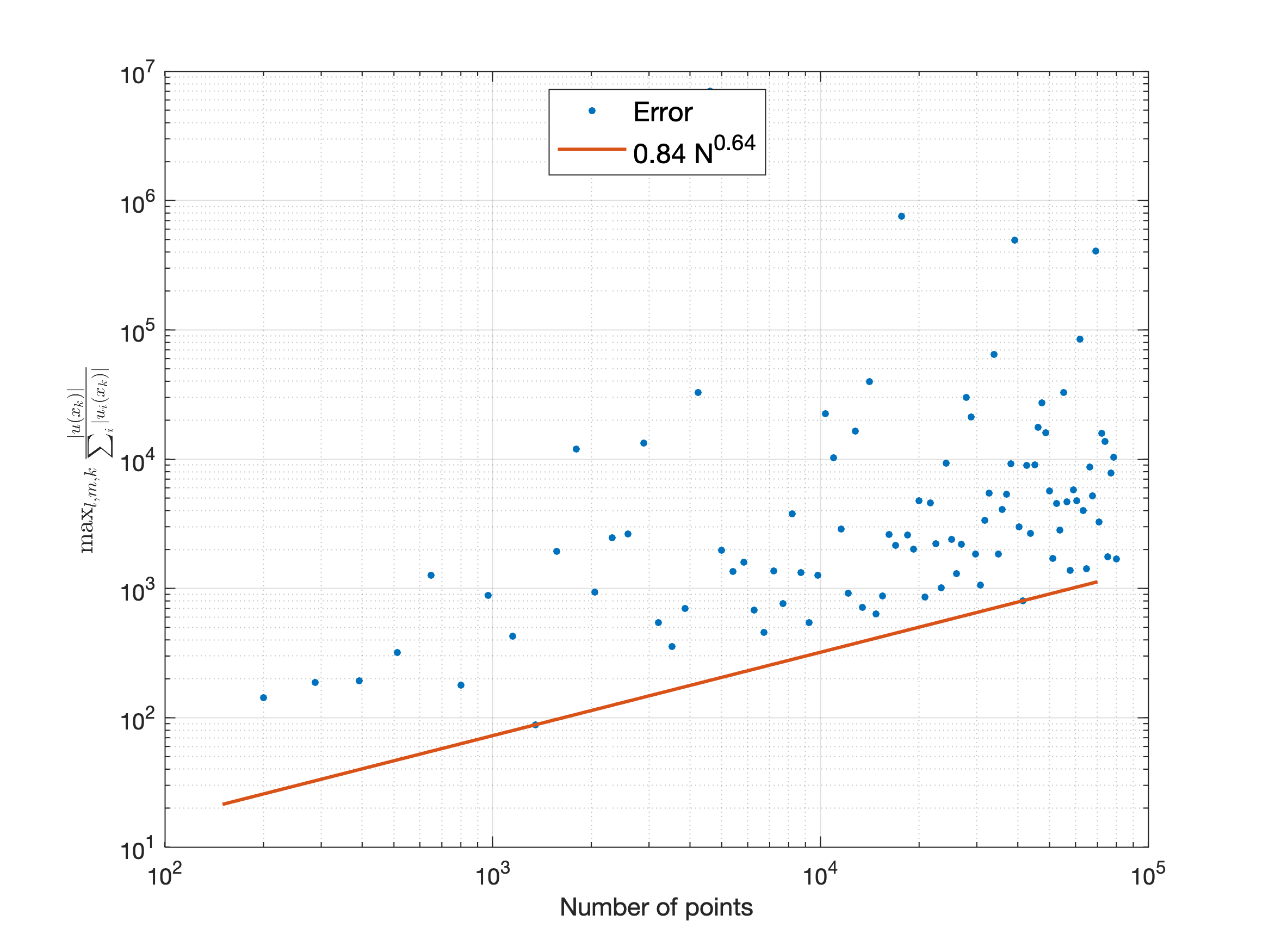}
		\subcaption{$\widetilde{r}_{10,N}$}
	\end{minipage}
\end{minipage}
\begin{minipage}{\textwidth}
\centering	
	\begin{minipage}{0.45\textwidth}
	\centering
		\includegraphics[width=\textwidth]{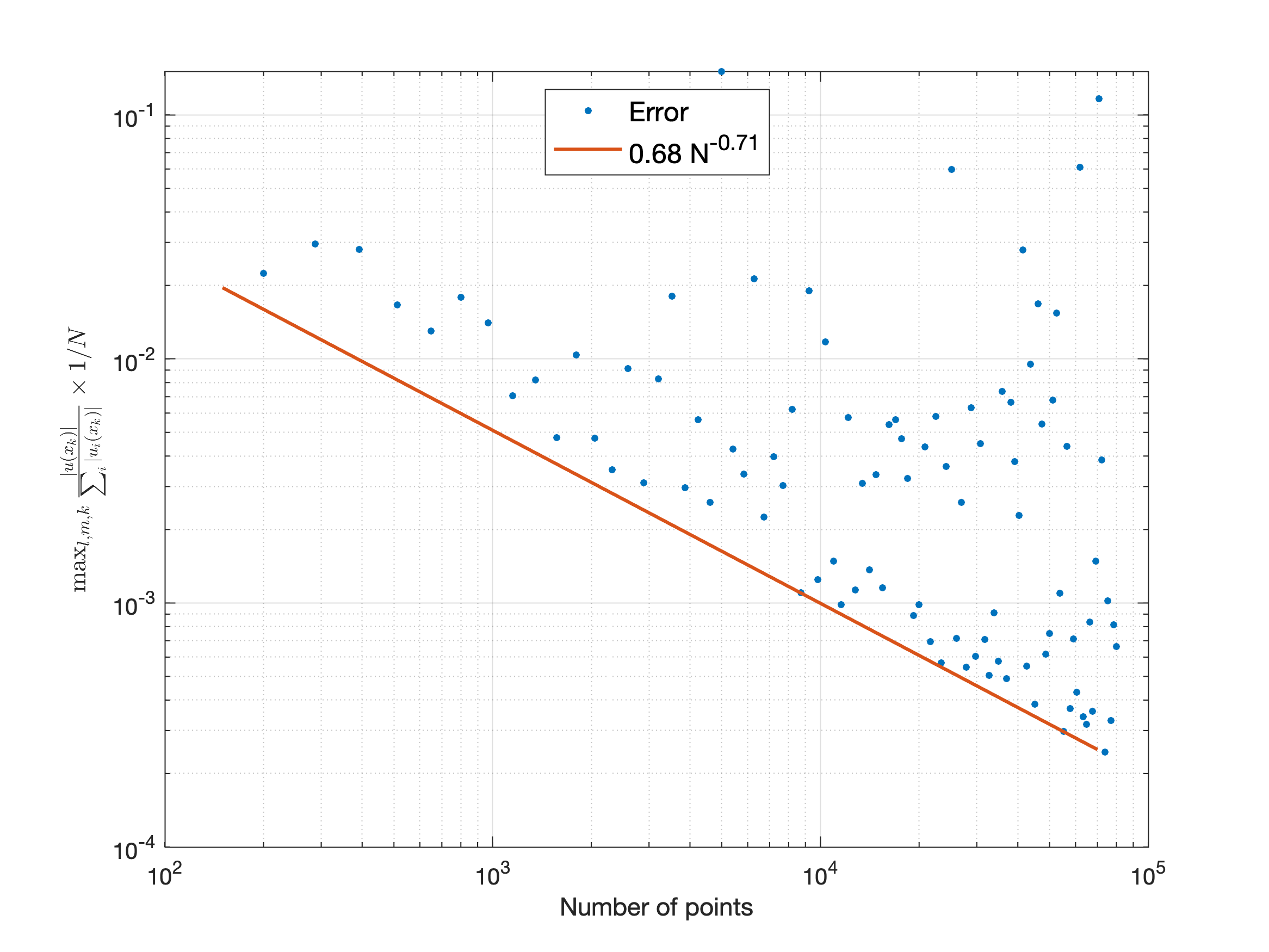}
		\subcaption{$\widehat{r}_{10,N}/N$}
	\end{minipage}
	\begin{minipage}{0.45\textwidth}
	\centering
		\includegraphics[width=\textwidth]{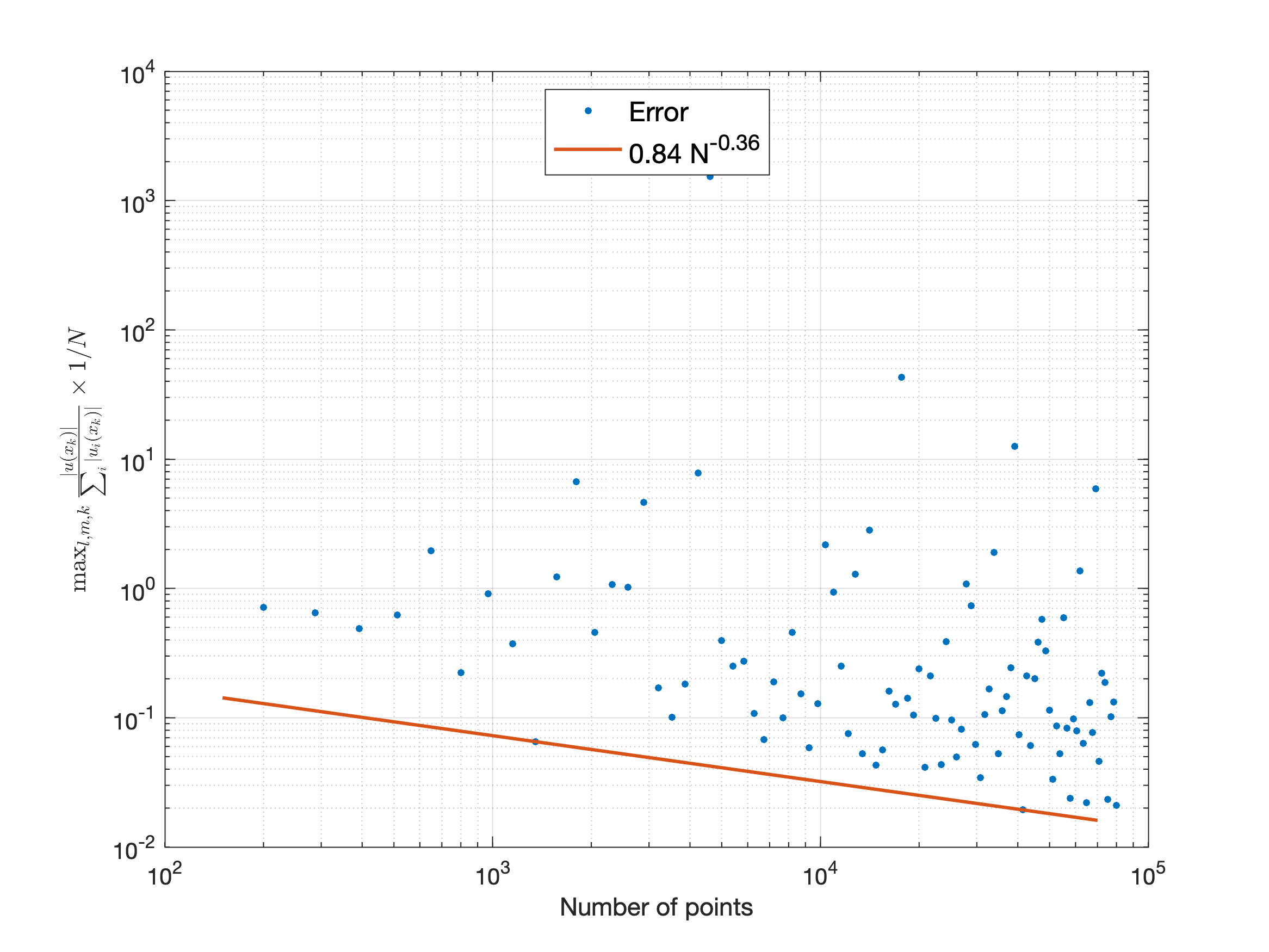}
		\subcaption{$\widetilde{r}_{10,N}/N$}
	\end{minipage}
\end{minipage}
\begin{minipage}{0.9\textwidth}
\caption{Stability test for forward {\fav}. The top row shows that the quantities $\widehat{r}_{10,N}$ and $\widetilde{r}_{10,N}$ have an envelope with positive power rate, which indicates their increasing trend as number of points $N$ increases. The bottom row shows that the quantities $\widehat{r}_{10,N}/N$ and $\widetilde{r}_{10,N}/N$ both have an envelope with negative power rate, which indicates their decreasing trend as $N$ increases.}\label{fig:stability L10}
\end{minipage}
\end{figure}

%\clearpage
\section{Conclusions and Discussion}
This work proposes the first concrete fast algorithm which evaluates the forward and adjoint transforms of vector spherical harmonics for tangent fields. The fast algorithm (which we call {\fav}) is made possible from the representation of FwdVSHT and AdjVSHT by scalar spherical harmonics and Clebsch-Gordan coefficients.
By scalar FFTs on the sphere, the proposed algorithms for FwdVSHT and AdjVSHT both achieve the near-linear computational complexity.
The accuracy, computational speed and numerical stability of {\fav} are validated by the numerical examples of simulated tangent fields. We develop a software package in Matlab for {\fav}, which works for polynomial-exact quadrature rules. This package has been used in the fast computation for tensor needlet transform --- a fast multiresolution analysis --- for tangent field on the sphere in \cite{LiBrOlWa2019}. In future work, we will use the package for solving partial differential equations on the sphere such as Stokes or Navier-Stokes equations on $\sph{2}$.
As full skype maps in cosmology are usually evaluated at HEALPix points \cite{Gorski_etal2005}, we will develop a software package of {\fav} for HEALPix in Python in future.

% Appendix
%\clearpage
\begin{acks}
%Some of the results in this paper have been derived using the HEALPix \cite{Gorski_etal2005} package.
The authors thank the anonymous referees' helpful comments and E. J. Fuselier and G. B. Wright for providing their MATLAB program which generates simulated tangent fields. The authors also thank P. Broadbridge and A. Olenko for their helpful comments during the initial stage of this work. M. Li acknowledges support from the National Natural Science Foundation of China under Grant 61802132, and the Australian Research Council under Discovery Project DP160101366 when he worked with  P. Broadbridge and A. Olenko at La Trobe University. Q. T. Le Gia and Y. G. Wang acknowledges support from the Australian Research Council under Discovery Project DP180100506. 
\end{acks}
% Bibliography
%\clearpage
%\bibliographystyle{ACM-Reference-Format}
\bibliographystyle{abbrv}
\bibliography{favest}

\end{document}